\theoremstyle{plain}
\newtheorem{proposition}{Proposition}[section]
\newtheorem{theorem}[proposition]{Theorem}
\newtheorem{corollary}[proposition]{Corollary}
\newtheorem{lemma}[proposition]{Lemma}
\newtheorem{definition}[proposition]{Definition}
\newtheorem*{thm*}{Theorem}
\newtheorem*{lemma*}{Lemma}
\newtheorem*{prop*}{Proposition}
\theoremstyle{definition}
\theoremstyle{remark}
\newtheorem{example}{Example}
\newtheorem{remark}{Remark}
\numberwithin{table}{section}
\DeclareMathOperator{\Frob}{Frob}
\DeclareMathOperator{\Gal}{Gal}
\DeclareMathOperator{\Jac}{J}
\newcommand{\GL}{{\rm GL}}
\newcommand{\HGM}{\mathcal H}
\newcommand{\hgm}{\HGM((a,b),(c,d)|z)}
\newcommand{\Mq}{{\mathcal M}_{(q,q,p)}^s}
\newcommand{\Mqs}{{\mathcal M}_{(q,q,p)}^{s_0}}
\newcommand{\Mp}{{\mathcal M}_{(p,p,q)}}
\newcommand{\M}{\mathcal M}
\def\ZZ{\mathbb Z}
\def\FF{\mathbb F}
\def\QQ{\mathbb Q}
\def\PP{\mathbb P}
\def\PP{\mathbb P}
\def\CC{\mathbb C}
\def\C{\mathcal C}
\def\<#1>{{\left\langle{#1}\right\rangle}}
\def\Z{{\mathbb Z}}             % integers
\def\Q{{\mathbb Q}}             % rationals
\def\id#1{{\mathfrak{#1}}}      % an ideal
\def\normid#1{{\norm{\id{#1}}}}
\DeclareMathOperator{\norm}{{\mathscr N}}
\DeclareMathOperator{\JacMot}{\mathbf J}
\newcolumntype{L}{>{$}l<{$}} % math-mode version of "l" column type
\newcolumntype{C}{>{$}c<{$}} % math-mode version of "c" column type
\def\gent{\chi_{\id{p}}}
\begin{document}
	
\title[Transformations of HGM]{On Transformation properties of hypergeometric motives  and Diophantine equations}
	
\author{Ariel Pacetti}

\address{Center for Research and Development in Mathematics and
  Applications (CIDMA), Department of Mathematics, University of
  Aveiro, 3810-193 Aveiro, Portugal} \email{apacetti@ua.pt}
\thanks{Funded by the Portuguese Foundation for
  Science and Technology (FCT) Individual Call to Scientific Employment Stimulus
  (https://doi.org/10.54499/2020.02775.CEECIND/CP1589/CT0032). This
  work was supported by CIDMA under the FCT
Multi-Annual Financing Program for R\&D Units}

\keywords{Hypergeometric Series, Hypergeometric Motives, Diophantine equations}
\subjclass[2010]{11D41,11F80}

\begin{abstract}
  Over the last two hundred years different transformation formulas for
  Gauss' hypergeometric function ${}_2F_1$ were discovered. The goal
  of the present article is to study their arithmetic analogue for the
  underlying hypergeometric motive. As an application, we show how these transformation properties can be used in the study of some Diophantine equations.
\end{abstract}
	
\maketitle

\section{Introduction}

Let $a,b,c$ be complex numbers, and let $z \in \CC$ with $|z|<1$. Recall the definition of Gauss' hypergeometric function
\begin{equation}
  \label{eq:Gauss}
  {}_2F_1(a,b;c|z):=\sum_{n=0}^\infty \frac{(a)_n(b)_n}{(c)_n n!} z^n,
\end{equation}
where for $n$ a non-negative integer, $(a)_n$ is the Pocchammer symbol
defined by
\[
  (a)_n = \begin{cases}
    1 & \text{ if }n=0,\\
    a(a+1)\cdots (a+n-1) & \text{ if }n>0.
    \end{cases}
\]

The hypergeometric function ${}_2F_1$ satisfies the differential equation
\begin{equation}
  \label{eq:diff-eq}
  z(1-z) \frac{d^2}{dz^2} + [c-(a+b+1)z]\frac{d}{dz} -ab=0.
\end{equation}

The differential equation has three singular points in $\PP^1$, namely
at the points $\{0,1,\infty\}$.  There are many transformation
formulas satisfied by Gauss' hypergeometric function (discovered mostly
during the XIX century by Gauss, Kummer, Goursat et al).

\begin{example}
  \label{ex:1}
  Formula 49 in \cite{MR1578088} (page 77), which matches formula
  $(38)$ in \cite{MR1508709} (page 119), states the following relation:
  \begin{equation}
  \label{eq:1}
 {}_2F_1\left(a,b;\frac{a+b+1}{2}|z\right) = {}_2F_1\left(\frac{a}{2},\frac{b}{2};\frac{a+b+1}{2}|4z(1-z)\right).
\end{equation}
\end{example}

\vspace{2pt}

With some manipulation, it is not hard to verify that both functions
satisfy the same differential equation.  Riemann realized that instead
of studying the differential equation itself, one should focus on the
so called \emph{monodromy representation}. Let
$z_0\in \CC \setminus \{0,1\}$ be a complex ``base point''. Let
$\{f_1,f_2\}$ be a basis of solutions of the differential equation. If
$\eta$ is a loop based at $z_0$, one can holomorphically extend the
solutions $f_1$ and $f_2$ along $\eta$ and obtain a new basis of
solutions. This gives a representation
\begin{equation}
  \label{eq:rep-monodromy}
  \rho : \pi_1(\PP^1\setminus\{0,1,\infty\},z_0) \to \GL_2(\CC),
\end{equation}
that sends a loop $\eta$ to the change of basis matrix.

With a little more generality, given $a,b,c,d$ complex numbers, consider the
differential equation
\begin{equation}
  \label{eq:diff-2}
  z^2(1-z)\frac{d^2}{dz^2}+z[c+d-1-(a+b+1)z]\frac{d}{dz}+(c-1)(d-1)-abz=0.
\end{equation}

This differential equation has a similar monodromy representation, that
we denote by $\rho$. The case $d=1$ corresponds to Gauss'
hypergeometric function, while the general case is a ``twist'' of it (as explained in \cite{GPV}).

For ease of notation, we denote by $\exp(a)$ the complex number
$e^{2\pi i a}$.

\begin{definition}
  We say that the parameters $(a,b),(c,d)$ are \emph{generic} if the
  sets $\{\exp(a),\exp(b)\}$ and
  $\{\exp(c),\exp(d)\}$ are disjoint.
\label{def:gen}
\end{definition}

In the present article we will only study parameters that are generic,
so from now we assume this is always the case. Under this assumption
the following properties hold:
\begin{enumerate}
\item The monodromy representation depends only on the values
  $a,b,c,d$ up to integral translation (as proven in \cite[Proposition
  2.5]{BH})
  
\item The monodromy representation is irreducible (as proven in
  \cite[Proposition 3.3]{BH}).
  
\item The monodromy representation is \emph{rigid} (as proven in
  \cite[Theorem 3.5]{BH}).
\end{enumerate}
Let us explain in more detail the notion of being rigid. Two
representations $\rho_1, \rho_2$ are \emph{isomorphic} if they are the
same up to a choice of a basis for the vector space. Equivalently,
once a basis for each vector space is chosen, they are isomorphic if
there exists a matrix $C \in \GL_2(\CC)$ such that
$\rho_1(\eta) = C\rho_2(\eta)C^{-1}$ for any loop $\eta$.

A direct application of Van Kampen's theorem proves that the
fundamental group $\pi_1(\PP^1\setminus\{0,1,\infty\},z_0)$ is
generated by three loops $\{\eta_0,\eta_1,\eta_\infty\}$, where
$\eta_i$ is a simple loop in counterclockwise direction around the
point $i$, for $i=0,1,\infty$, satisfying the condition
$\eta_0 \eta_1\eta_\infty$ being homotopically trivial. The
monodromy representation is then uniquely determined by the values
\[
M_0:=\rho(\eta_0),\qquad M_1:=\rho(\eta_1),\qquad M_\infty:=\rho(\eta_\infty).
  \]
  Two representations $\rho,\rho'$ are isomorphic if the triple of
  matrices $(M_0,M_1,M_\infty)$ are conjugated to
  $(M_0',M_1',M_\infty')$ (by an invertible matrix). The rigidity
  property means that it is enough for each $M_i$ being conjugated to
  $M_i'$ for $i=0,1,\infty$ for the two representations to be
  isomorphic (so the conjugacy class of each monodromy matrix determines the
  representation).
  
A result of Level (Theorem 1.1 of his Ph.D. thesis, or \cite[Theorem
3.5]{BH}) states that given generic parameters $(a,b),(c,d)$, there exists a (unique up to isomorphism) monodromy representation
whose representatives for the conjugacy monodromy matrices $M_i$ around the
point $i$, for $i=0,1,\infty$ are
\begin{equation}
  \label{eq:monodromy}
  M_0 =
  \begin{cases}
    \left(\begin{smallmatrix}
            \exp(-c) & 0\\
            0 & \exp(-d)\end{smallmatrix}\right) & \text{ if } c-d \not \in \ZZ,\\
    \left(\begin{smallmatrix}
            \exp(-c) & 1\\
            0 & \exp(-c)\end{smallmatrix}\right) & \text{ otherwise}.
  \end{cases}
  \quad
  M_1 =
  \begin{cases}
    \left(\begin{smallmatrix}
            1 & 0\\
            0 &\exp(c+d-a-b)\end{smallmatrix}\right) & \text{ if } a+b-c-d \not \in \ZZ,\\
    \left(\begin{smallmatrix}
            1 & 1\\
            0 & 1\end{smallmatrix}\right) & \text{ otherwise}.
  \end{cases}
\end{equation}
and
\[
  M_\infty =
  \begin{cases}
    \left(\begin{smallmatrix}
            \exp(a) & 0\\
            0 & \exp(b)\end{smallmatrix}\right) & \text{ if } a-b \not \in \ZZ,\\
    \left(\begin{smallmatrix}
            \exp(a) & 1\\
            0 & \exp(a)\end{smallmatrix}\right) & \text{ otherwise}.
  \end{cases}
\]
The monodromy representation attached to (the differential equation of) an
hypergeometric series is unramified outside $\{0,1,\infty\}$ and the
monodromy matrix at $1$ is a pseudo-reflexion (a matrix whose eigenvalue $1$
has an eigenspace of codimension $1$). Any monodromy representation
satisfying these two property comes from a differential equation as
(\ref{eq:diff-2}). We will use the term \emph{hypergeometric}
monodromy representation for them.

\begin{definition}
Let $\varphi : \PP^1 \to \PP^1$ be a continuous map, and let $\rho$ be
an hypergeometric monodromy representation. The \emph{pullback} of
$\rho$ by $\varphi$ is the monodromy representation
 \begin{equation}
   \label{eq:pullback}
   \varphi^*(\rho): \pi_1\left(\PP^1 \setminus \varphi^{-1}(\{0,1,\infty\})\right) \to \GL_2(\CC),
 \end{equation}
 defined by $\varphi^*(\rho)(\eta) = \rho
 (\varphi(\eta))$.
\end{definition}  

A priory the pullback of an hypergeometric representation needs not be hypergeometric (as for example it might ramify in more than three points).

 \begin{definition}
   An \emph{hypergeometric relation} is a pair $(\varphi,\rho)$
   consisting of an algebraic map $\varphi: \PP^1 \to \PP^1$ and an
   hypergeometric monodromy representation $\rho$ satisfying that
   $\varphi^*(\rho)$ is also hypergeometric.
 \end{definition}

 It seems like a natural question to study the following problem.
 
 \vspace{4pt}
 \noindent {\bf Problem:} determine/classify hypergeometric relations
 $(\varphi,\rho)$.
 \vspace{4pt}

 This problem (presented in a little different way) was intensively
 studied by Kummer (in \cite{MR1578088}) and Goursat (in
 \cite{MR1508709}). Goursat obtained a complete list of algebraic
 transformations satisfied by Gauss' hypergeometric function for small
 degrees. For a modern approach (including historical references) see
 the article \cite{MR2547100}.

 \setcounter{example}{0}
 \begin{example}[continued]
   Let us study Example~\ref{ex:1} in more detail.  Let
   $\varphi : \PP^1 \to \PP^1$ be the degree $2$ cover sending
   $z \to 4z(1-z)$. To compute the ramification points, we just
   compute the discriminant of the polynomial $4z(1-z)-t$, which
   equals $16(t-1)$. Then the cover is ramified precisely at the
   points $1$ and $\infty$. 

   The preimage of the ramified points are $1/2$ and $\infty$
   respectively. The preimage of $0$ consists of the two points
   $\{0,1\}$. Let $\rho$ denote the hypergeometric monodromy
   representation with parameters
   $\left(\frac{a}{2},\frac{b}{2}\right)\left(\frac{a+b+1}{2},1\right)$. Then
   the pullback $\varphi^*(\rho)$ is unramified outside the set
   $\{0,1,1/2,\infty\}$.

   A representative for the conjugacy class of $\rho$ at $\eta_1$
   is 
 \[
M_1 =
\begin{pmatrix}
  -1 & 0 \\
  0 & 1
\end{pmatrix}.
\]
Since $1/2$ is a ramified point,
$\varphi^*(\rho)(\eta_{1/2}) = M_1^2 =\left(\begin{smallmatrix}1&0\\0&
                                                                       1\end{smallmatrix}\right)$, so the pullback is unramified at
$1/2$. On the other hand,
\[
  \varphi^*(\rho)(\eta_0)=  \varphi^*(\rho)(\eta_1)=\rho(0)=
  \begin{pmatrix}
    \exp(\frac{a+b+1}{2}) & 0\\
    0 & 1
  \end{pmatrix}
\]
The fact that $\varphi^*(\rho)$ at $1$ is a pseudo-reflection implies
that the pullback is an hypergeometric monodromy representation, so
$(\varphi,\rho)$ is an hypergeometric relation. We can also compute
the missing monodromy matrix
\[\varphi^*(\rho)(\eta_\infty) = \rho(\eta_\infty)^2=
  \begin{pmatrix}
    \exp(a) & 0\\
    0 & \exp(b)
  \end{pmatrix}.
\]
The rigidity property then implies that the monodromy representation
of $\varphi^*(\rho)$ matches the monodromy representation with
parameters $(a,b),(\frac{a+b+1}{2},1)$ as expected.   
 \end{example}
 Knowing that the two monodromy representations are the same, does not
 necessarily imply a simple relation between the corresponding Gauss'
 hypergeometric functions, since the space of solutions is two
 dimensional! It does however imply that one hypergeometric series is
 a linear combination of other two ones, but an equality
 like~(\ref{eq:1}) is not always true, and when it does hold, the proof requires
 some extra work.
 \begin{example}
\label{ex:2}
   Consider the transformation formula obtained from
   \cite{MR1578088} (formula 67 in page 81):

\begin{multline}
  \label{eq:67}
  {}_2F_1\left(a,b;\frac{a+b+1}{2}|z\right)=\frac{\cos(a-b)\frac{\pi}{2}}{\cos(a+b)\frac{\pi}{2}} \; {}_2F_1\left(a,b;\frac{a+b+1}{2}|1-z\right) +\\
  \frac{\Gamma(\frac{a+b-1}{2}) \Gamma(\frac{a+b-3}{2})}{\Gamma(a-1)\Gamma(b-1)} (1-z)^{\frac{1-a-b}{2}}{}_2F_1\left(\frac{a-b+1}{2},\frac{b-a+1}{2};\frac{3-a-b}{2}|1-z\right).
\end{multline}
The monodromy matrices of the hypergeometric series ${}_2F_1(a,b,\frac{a+b+1}{2})$ at $0, 1$ and $\infty$ are
\[
M_0=
\begin{pmatrix}
  \exp(-\frac{a+b+1}{2}) & 0\\
  0 & 1
\end{pmatrix},
\qquad
M_1=
\begin{pmatrix}
  \exp(-\frac{a+b+1}{2}) & 0\\
  0 & 1
\end{pmatrix},
M_{\infty}=
\begin{pmatrix}
  \exp(a) & 0\\
  0 & \exp(b)
\end{pmatrix}.
\]

Since the monodromy matrices at $0$ and at $1$ are the same, the map
that swaps $0$ and $1$ (sending $z$ to $1-z$) preserves the monodromy
representation. Then the first hypergeometric summand on the right
hand side satisfies the same differential equation as the left hand
side function. The hypergeometric series appearing in the second term
on the right has monodromy matrices
\[
  N_0=
  \begin{pmatrix}
    \exp(\frac{a+b+1}{2}) & 0\\
    0 & 1
  \end{pmatrix},
  \quad
  N_1=
  \begin{pmatrix}
    \exp(-\frac{a+b+1}{2}) & 0\\
    0 & 1
  \end{pmatrix},
  \quad
  N_{\infty}=
  \begin{pmatrix}
    \exp(\frac{a-b+1}{2}) & 0\\
    0 & \exp(\frac{b-a+1}{2})
  \end{pmatrix}.
\]
We need to compose with the map that swaps the monodromy at $0$ and
$1$ (so we get the right monodromy matrix at $0$) and then twist by
the character ramified at $\{1,\infty\}$ (corresponding to the term
$(1-z)^{\frac{1-a-b}{2}}$). This multiplies the monodromy matrix at
$1$ (namely $N_0$ because of the swap) by $\exp(-\frac{a+b+1}{2})$ and
the monodromy matrix at $\infty$ by $\exp(\frac{1+a+b}{2})$. Then the
monodromy representation also coincides with the previous one.
However, in this case, a linear combination of the two solutions is
needed to get a match of hypergeometric series.
\end{example}

\vspace{3pt}

In the present article we are interested on the \emph{arithmetic} side
of the previous transformation formulas. Suppose that the parameters
$(a,b),(c,d)$ besides being generic are also rational numbers. Let $N$
be their least common denominator. Then (as proved in \cite{GPV})
there exists an hypergeometric motive $\hgm$ defined over
$F:=\Q(\zeta_N)$ having the hypergeometric series as a period.

The profinite completion of the fundamental group
$\pi_1\left(\PP^1 \setminus \varphi^{-1}(\{0,1,\infty\})\right)$ is
isomorphic to $\Gal(\Omega/\overline{\Q}(z))$, where $\Omega$ denotes
the maximal extension of $\overline{\Q}(z)$ unramified outside
$\{0,1,\infty\}$ (see \cite[Theorem 6.3.1]{MR2363329}). Then for each
prime ideal $\id{p}$ of $F$ there is a continuous Galois
representation
\[
  \rho_{\id{p}}: \Gal(\overline{\Q(z)}/\overline{\Q}(z)) \to
\GL_2(F_{\id{p}}),
\]
``extending'' the monodromy representation (after identifying the
group $\pi_1(\PP^1\setminus \{0,1,\infty\},z_0)$ with a subgroup of
$\Gal(\Omega/\overline{\Q}(z))$). By \cite[Remark 6.28]{GPV} this
``geometric'' representation extends to an ``arithmetic'' one, namely
for each prime ideal $\id{p}$ of $F$ there exists a Galois representation
\begin{equation}
  \label{eq:gal-rep}
  \rho_{\id{p}}: \Gal(\overline{\Q(z)}/F(z)) \to
\GL_2(F_{\id{p}}),
\end{equation}
extending the monodromy representation. The goal of the present
article is to relate the classical hypergeometric series relations
with relations between the associated hypergeometric motives.

\section{Hypergeometric motives}
\label{sec:HGM}

The hypergeometric motive $\hgm$ defined in \cite{GPV} appears (up to
a twist by a Hecke character) in the middle cohomology of Euler's
curve. Consider the following problem:

\vspace{3pt}
\noindent {\bf Problem:} Let $(a,b),(c,d)$ and
$(\alpha,\beta),(\gamma,\delta)$ be two pairs of generic rational
parameters. How to determine if the hypergeometric motives $\hgm$ and
$\HGM((\alpha,\beta),(\gamma,\delta)|z)$ are isomorphic?
\vspace{3pt}

Let us start studying a related problem.
\begin{proposition}
  Let $H$ be a normal subgroup of $G$ and let $\rho:H \to \GL_n(L)$ be
  an irreducible representation of $H$. If the representation extends
  to an $n$-dimensional representation of $G$, then the extension is
  unique up to a twist by a character of $G/H$.
  \label{prop:unicity}
\end{proposition}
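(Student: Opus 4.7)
The plan is to compare two extensions $\tilde\rho_1,\tilde\rho_2:G\to\GL_n(L)$ of $\rho$ by forming the ``ratio'' $\chi(g):=\tilde\rho_2(g)^{-1}\tilde\rho_1(g)$ and proving that this is a scalar matrix, that the resulting function $\chi:G\to L^\times$ is a homomorphism, and that it is trivial on $H$. Once this is done, $\tilde\rho_1 = \chi\otimes\tilde\rho_2$ with $\chi$ factoring through $G/H$, which is exactly the statement to prove.

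The first step is the key algebraic observation. For any $g\in G$ and $h\in H$, normality of $H$ gives $ghg^{-1}\in H$, and since both $\tilde\rho_j$ restrict to $\rho$ on $H$ we get
\[
\tilde\rho_j(g)\,\rho(h)\,\tilde\rho_j(g)^{-1} \;=\; \tilde\rho_j(ghg^{-1}) \;=\; \rho(ghg^{-1}), \qquad j=1,2.
\]
Equating the two expressions yields $\tilde\rho_2(g)^{-1}\tilde\rho_1(g)\cdot\rho(h) = \rho(h)\cdot\tilde\rho_2(g)^{-1}\tilde\rho_1(g)$ for every $h\in H$, so $\tilde\rho_2(g)^{-1}\tilde\rho_1(g)$ lies in $\End_H(\rho)$.

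The second step invokes Schur's lemma: since $\rho$ is irreducible, $\End_H(\rho) = L$, so $\tilde\rho_2(g)^{-1}\tilde\rho_1(g) = \chi(g)\cdot I$ for a unique $\chi(g)\in L^\times$. The third step is a direct verification: expanding $\tilde\rho_1(g_1g_2)=\chi(g_1g_2)\tilde\rho_2(g_1g_2)$ and comparing with $\tilde\rho_1(g_1)\tilde\rho_1(g_2)=\chi(g_1)\chi(g_2)\tilde\rho_2(g_1g_2)$ shows $\chi$ is multiplicative, and the equality $\tilde\rho_1|_H = \tilde\rho_2|_H = \rho$ forces $\chi|_H\equiv 1$. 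Hence $\chi$ factors through $G/H$ and $\tilde\rho_1 = \chi\otimes\tilde\rho_2$.

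The only real obstacle is the application of Schur's lemma in the second step, which requires $\End_H(\rho)=L$ rather than merely the absence of $H$-invariant subspaces; this is the content of ``irreducible'' meaning \emph{absolutely} irreducible. If one wishes to be fully rigorous over a non-algebraically-closed $L$, one extends scalars to an algebraic closure, carries out the argument there to produce $\chi$, and then checks that $\chi$ takes values in $L^\times$ (which follows because both $\tilde\rho_j$ are defined over $L$, so for any $g$ the scalar $\chi(g)$ can be read off from the ratio of any nonzero matrix entry of $\tilde\rho_1(g)$ and the corresponding entry of $\tilde\rho_2(g)$).
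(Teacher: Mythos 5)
Your proof is correct and follows essentially the same route as the paper's: conjugate $\rho(h)$ by both extensions, use Schur's lemma to conclude that $\tilde\rho_2(g)^{-1}\tilde\rho_1(g)$ is scalar, and then check that the resulting $\chi$ is a character trivial on $H$. Your added caveat that Schur's lemma in this form requires \emph{absolute} irreducibility is a fair point the paper glosses over, though it is harmless in the intended application where $L=\overline{\QQ_p}$ is algebraically closed.
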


\begin{proof}
  The results follows from an easy application of Schur's
  lemma. Suppose that $\rho_i:G \to \GL_n(K)$, $i=1,2$ are two
  extensions of $\rho$. Then for any $g \in G$ and any $h \in H$, the
  equality
  \[
    \rho_1(g)\rho_1(h)\rho_1(g)^{-1}=\rho_1(ghg^{-1})=\rho_2(ghg^{-1})=\rho_2(g)\rho_2(h)\rho_2(g)^{-1},
  \]
  implies that $\rho_2(g^{-1})\rho_1(g)$ must commute with $\rho$,
  hence by Schur's lemma it is a scalar matrix of the form
  $\chi(g) \cdot 1_n$, for some $\chi(g) \in L^\times$ (where $1_n$
  denotes the $n \times n$ identity matrix). The fact that $\rho_1$
  and $\rho_2$ are representations implies that $\chi$ is a character,
  and the fact that they are extensions of $\rho$ implies that $\chi$ is trivial
  on $H$.
\end{proof}

\begin{corollary}
\label{coro:unicity}
  Let $K$ be a number field and let
  $\rho_i:\Gal(\overline{\Q(t)}/K(t)) \to \GL_2(\overline{\Q_p})$ for
  $i=1,2$ be two irreducible continuous representations. Suppose that the following two conditions hold:
  \begin{enumerate}
  \item   $\rho_1|_{\Gal_{\overline{\Q}(t)}} \simeq
    \rho_2|_{\Gal_{\overline{\Q}(t)}}$,
    
  \item The restriction $\rho_1|_{\Gal_{\overline{\Q}(t)}}$ is irreducible.
  \end{enumerate}
  Then there exists
  $\chi:\Gal(\overline{\Q}/K) \to \overline{\Q_p}^\times$ such that
  $\rho_1 \simeq \rho_2 \otimes \chi$.
\end{corollary}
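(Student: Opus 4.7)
The plan is to reduce the corollary to a direct application of Proposition \ref{prop:unicity}. First I would set $G = \Gal(\overline{\Q(t)}/K(t))$ and $H = \Gal(\overline{\Q(t)}/\overline{\Q}(t))$, so that condition (1) concerns $\rho_i|_H$ and condition (2) is the irreducibility of $\rho_1|_H$. I need to verify that $H$ is a normal subgroup of $G$ whose quotient is canonically identified with $\Gal(\overline{\Q}/K)$. Since $K$ is a number field, $\overline{\Q}/K$ is Galois, and consequently the extension $\overline{\Q}(t)/K(t)$ is Galois; the restriction map $G \to \Gal(\overline{\Q}(t)/K(t))$ is then surjective with kernel $H$, and the target is canonically $\Gal(\overline{\Q}/K)$ via the natural isomorphism that evaluates a $\overline{\Q}(t)$-automorphism on the constant field.

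Next I would use hypothesis (1) to replace $\rho_2$ by a conjugate making the equality of restrictions literal. Concretely, fix a matrix $C \in \GL_2(\overline{\Q_p})$ realizing the isomorphism $\rho_1|_H \simeq \rho_2|_H$ and redefine $\rho_2$ by $g \mapsto C^{-1} \rho_2(g) C$; this does not change the isomorphism class we are trying to establish (the possible twist $\chi$ is insensitive to this conjugation). After this reduction, $\rho_1$ and $\rho_2$ are two extensions to $G$ of one and the same irreducible representation of $H$, the irreducibility being hypothesis (2).

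At this point Proposition \ref{prop:unicity}, applied with $n=2$ and $L=\overline{\Q_p}$, produces a character $\chi: G/H \to \overline{\Q_p}^\times$ such that $\rho_1 = \rho_2 \otimes \chi$. Transporting $\chi$ along the identification $G/H \simeq \Gal(\overline{\Q}/K)$ yields the desired character. Continuity of $\chi$ is automatic: since $\chi(g) \cdot 1_2 = \rho_2(g)^{-1}\rho_1(g)$ and both $\rho_1,\rho_2$ are continuous, so is $\chi$.

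The only real content beyond citing Proposition \ref{prop:unicity} is the identification of $G/H$ with $\Gal(\overline{\Q}/K)$, which is standard Galois theory of function fields over an algebraically closed constant field; I would expect no serious obstacle, but this is the step that most deserves a careful sentence in the write-up. Everything else is a formal application of the preceding proposition together with the irreducibility assumption (2), which is exactly the hypothesis needed to invoke Schur's lemma in that proof.
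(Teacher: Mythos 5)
Your proposal is correct and follows essentially the same route as the paper, which likewise sets $H=\Gal(\overline{\Q(t)}/\overline{\Q}(t))$, $G=\Gal(\overline{\Q(t)}/K(t))$ and invokes Proposition~\ref{prop:unicity} together with the identification $\Gal(\overline{\Q}(t)/K(t))\simeq\Gal(\overline{\Q}/K)$. Your extra step of conjugating $\rho_2$ so that the restrictions to $H$ are literally equal is a small but welcome precision that the paper leaves implicit.
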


\begin{proof}
  Just take $H:=\Gal(\overline{\Q(t)}/\overline{\Q}(t))$ and
  $G:=\Gal(\overline{\Q(t)}/K(t))$ in the previous proposition and
  recall that
  $\Gal(\overline{\Q}(t)/K(t)) \simeq \Gal(\overline{\Q}/K)$.
\end{proof}

The corollary gives an answer to our problem: the two hypergeometric
motives are isomorphic if and only if the following two properties
hold:
\begin{enumerate}
\item The monodromy representations of both parameters are isomorphic.
\item At one specialization of the parameter the motives are isomorphic.  
\end{enumerate}

One implication is clear: if the motives are isomorphic, (2) must
hold, and also the restriction of their Galois representations to
$\pi_1\left(\PP^1 \setminus \varphi^{-1}(\{0,1,\infty\})\right)$ must
also be isomorphic, so (1) holds. For the other implication, the first
property assures that both motives have the same geometric
representation, so by Corollary~\ref{coro:unicity} they differ by a
twist which does not depend on the parameter. Then the twist is
determined by any specialization.

Verifying whether condition (1) holds is a linear algebra
problem in most cases. To check whether condition (2) holds is
more challenging.

\subsection{Finite hypergeometric sums}
Let $N>1$ be an integer and $F=\Q(\zeta_N)$. Let $\id{p}$ be a prime
ideal in $F$ prime to $N$ and let $\FF_q$ denote its residue
field. Let $\psi$ be an additive character of $\FF_q^\times$. For $\omega$ a character of $\FF_q^\times$, denote by $g(\psi,\omega)$ Gauss' sum
\begin{equation}
  \label{eq:gauss-sum}
  g(\psi,\omega):=\sum_{x \in \FF_q^\times} \omega(x)\psi(x).
\end{equation}
Following \cite{MR0051263}, for $x$ an integer prime to $\id{p}$, let
$\gent(x)$ denote the $N$-th root of unity congruent to
$x^{(q-1)/N}$ modulo $\id{p}$. Extend the definition by setting
$\gent(x)=0$ if $\id{p} \mid x$. This determines a character
\begin{equation}
  \label{eq:gen-defi}
\gent: (\Z[\zeta_N]/\id{p})^\times \to \CC^\times.  
\end{equation}
\begin{definition}
  Let $(a,b),(c,d)$ be generic rational parameters, and let $N$ be
  their least common denominator. Let $\id{p}$ be a prime ideal of
  $F$. For $z \in \FF_q$, define the \emph{finite hypergeometric sum}
  $H_{\id{p}}((a,b),(c,d)|z)$ by
  \begin{equation}
    \label{eq:finite-hyp}
    \frac{1}{1-q}\sum_\omega \frac{g(\psi,\gent^{-aN}\omega)g(\psi,\gent^{-bN}\omega)g(\psi,\gent^{cN}\omega^{-1})g(\psi,\gent^{dN}\omega^{-1})}{g(\psi,\gent^{-aN})g(\psi,\gent^{-bN})g(\psi,\gent^{cN})g(\psi,\gent^{dN})}\omega(z),
  \end{equation}
  where the sum runs over characters of $\FF_q^\times$.
\end{definition}
The definition of the finite hypergeometric sum does not depend on the
choice of the additive character.
Let $z \in \Q$, with $z \neq 0,1$. Let $\id{p}$ a prime ideal of $F$
not dividing $N$ such that $v_{\id{p}}(z(z-1))=0$. Then the motive
$\hgm$ is unramified at $\id{p}$ and the trace of the Frobenius
element $\Frob_{\id{p}}$ acting on $\hgm$ equals
$H_{\id{q}}((a,b),(c,d)|z)$ (as proven in \cite[Theorem 6.29]{GPV}).

Then we can give a more combinatorial criteria to determine whether
two hypergeometric motives are isomorphic.

\begin{enumerate}
\item Determine whether the monodromy representations of both
  parameters are isomorphic.
  
\item Prove that there exists $z \in \Q$ satisfying that for all
  primes $\id{p}$ of $F$ but finitely many (or outside a set of
  density zero) the following equality holds:
  \[
    H_{\id{p}}((a,b),(c,d)|z)=H_{\id{p}}((\alpha,\beta),(\gamma,\delta)|z).
    \]
\end{enumerate}

\subsection{Special values}
\label{sec:Special}
Although the specializations $z=0, 1, \infty$ give singular motives,
in some cases it still makes sense to compute the value of a Frobenius
element at it. This flexibility allows in many concrete cases to prove
condition (2). Recall the following definition (from
\cite{MR0051263} and \S 5 of \cite{GPV}).

\begin{definition}
  \label{def:jacobi-motive}
  Let ${\bf a}=(a_1,\ldots,a_r)$ and ${\bf b}=(b_1,\ldots,b_s)$ be two
  sets of rational numbers and let $N$ be their least common
  denominator. The Jacobi motive attached to ${\bf a},{\bf b}$ at a
  prime ideal $\id{p}$ of $F=\QQ(\zeta_N)$ is defined as
  \begin{equation}
    \label{eq:Jac-motive}
    \JacMot({\bf a, b})(\id{p}) = (-1)^{r+s+1} \frac{g(\psi,\gent^{Na_1})\cdots
    g(\psi,\gent^{Na_r})g(\psi,\gent^{N\sum_j b_j-N\sum_i a_i})}{g(\psi,\gent^{Nb_1})\cdots g(\psi,\gent^{Nb_s})}.
  \end{equation}
\end{definition}

\begin{lemma}
  \label{lemma:spec-0} Let $(a,b),(c,1)$ be rational generic
  parameters. Let $\varphi(z)$ be a rational function vanishing at
  $z=0$ with order $v > 0$. Then if
  $vc\not \in \Z$, for any prime ideal $\id{p}$ of $F$ not dividing
  $N$, the trace of $\Frob_{\id{p}}$ on $\HGM((a,b),(c,1)|\varphi(z))$
  specialized at $z=0$ equals $1$.
\end{lemma}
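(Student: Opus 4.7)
The plan is to study the local Galois representation of $\HGM((a,b),(c,1)|\varphi(z))$ at a prime above $z=0$, and to show that its semisimplification splits as a trivial character plus a non-trivial tamely ramified character; only the trivial summand will contribute to the Frobenius trace, yielding the value $1$.

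The first step is a local monodromy computation at $z=0$. Writing $\varphi(z) = z^v u(z)$ with $u(0)\neq 0$, the inertia subgroup $I_0$ at $z=0$ acts on the pullback representation through the $v$-th power of the monodromy $M_0$ of $\HGM((a,b),(c,1)|w)$ at $w=0$. Using formula (\ref{eq:monodromy}) with $d=1$, $M_0$ has eigenvalues $\exp(-c)$ and $\exp(-1)=1$, so $M_0^v$ has eigenvalues $\exp(-vc)$ and $1$. The hypothesis $vc\not\in\Z$ makes these distinct, so $M_0^v$ is semisimple; hence the restriction of $\rho_{\id p}$ to the decomposition group $D_0$ semisimplifies as $\tilde\chi_1\oplus\tilde\chi_2$, with $\tilde\chi_1$ unramified (the eigenvalue-$1$ piece) and $\tilde\chi_2$ tamely ramified with inertia acting as multiplication by $\exp(-vc)$.

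The second step is to identify $\tilde\chi_1$ as the trivial character of $G_F$. The eigenvalue $1$ of $M_0$ reflects the local exponent $1-d=0$ at $w=0$, i.e.\ the holomorphic solution $1+O(w)$ of Equation (\ref{eq:diff-2}) with $d=1$. Motivically this realizes a trivial sub-motive inside the formal-disc restriction of $\HGM((a,b),(c,1)|w)$ at $w=0$; pulling back by $\varphi$ and specializing at $z=0$ then picks out precisely this ``constant'' piece, whose associated Galois representation is trivial. Therefore $\tilde\chi_1(\Frob_{\id p})=1$, while $\tilde\chi_2$ has no inertia invariants and hence contributes $0$ to the trace under the standard convention, yielding the claimed total of $1$.

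The main obstacle is ruling out a non-trivial unramified twist in $\tilde\chi_1$; since the trace must equal $1$ for every $\id p$ coprime to $N$, the character has to be globally trivial, not merely unramified. I expect to resolve this via the Jacobi motive formalism of Definition \ref{def:jacobi-motive}: the normalization $d=1$ forces $\gent^{dN}$ to be the trivial character, so that $g(\psi,\gent^{dN})=-1$ creates substantial cancellations in the expression (\ref{eq:Jac-motive}), and the Jacobi motive attached to the constant summand at $z=0$ evaluates explicitly to $1$. Alternatively, one may argue geometrically using the realization of $\HGM((a,b),(c,1)|w)$ as the middle cohomology of Euler's curve from \cite{GPV}: at $w=0$ the curve acquires a rational component whose contribution to the cohomology is a trivial Tate motive, and this is exactly the summand producing $\tilde\chi_1$.
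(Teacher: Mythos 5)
Your first step (inertia at $z=0$ acts through $M_0^v$, whose eigenvalues $\exp(-vc)$ and $1$ are distinct when $vc\notin\ZZ$, so one dimension is killed by ramification and only an unramified character survives) is sound and parallels the paper's argument, which likewise uses $vc\notin\ZZ$ to discard one of the two components of the degenerate fiber. The gap is exactly where you locate it yourself: identifying the surviving unramified character as the \emph{trivial} character of $G_F$. Neither of your two proposed resolutions works as stated. The geometric one is based on a false premise: in the semistable model of Euler's curve at $z=0$ the surviving component is $\C_1:y^N=x^{-bN}(1-x)^{(b-c)N}$, which is \emph{not} rational --- it has positive genus and complex multiplication, and its contribution to the relevant eigenspace is the Jacobi sum $\JacMot((b,c-b),(c))(\id{p})=-g(\psi,\gent^{bN})g(\psi,\gent^{(c-b)N})/g(\psi,\gent^{cN})$, a quantity of absolute value $\sqrt{\normid{p}}$, nothing like a trivial Tate motive. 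The reason the final answer is $1$ is that $\hgm$ as defined in \cite{GPV} is the cohomology of Euler's curve twisted by the inverse of a specific Jacobi motive (the normalization $\JacMot((-b,c),(c-b))^{-1}\gent(-1)^{bN}$), and one must verify by an explicit Gauss-sum manipulation (using $g(\psi,\chi)g(\psi,\chi^{-1})=\chi(-1)q$ and genericity) that this normalization exactly cancels the $\C_1$ contribution. That identity is the computational heart of the proof and is absent from your write-up; your first suggested resolution (``the normalization $d=1$ creates substantial cancellations'') gestures at it but does not carry it out, and the cancellation in question has nothing to do with $g(\psi,\gent^{dN})=-1$ being a sign --- it is a genuine identity between two weight-one Jacobi sums.

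A secondary point: your assertion that the eigenvalue-$1$ local solution ``motivically realizes a trivial sub-motive'' conflates the geometric monodromy (which only sees inertia) with the arithmetic Frobenius action; an unramified character can perfectly well be a nontrivial Hecke character, which is why the global identification genuinely requires the Jacobi-sum computation rather than a local observation. To complete the proof you should reproduce the degeneration of Euler's curve at $z=0$, compute the $\C_1$ contribution as a Jacobi motive, and check its product with the normalizing twist equals $1$.
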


\begin{proof}
  The proof follows the lines of \cite[Appendix A]{GPV} (see also
  \cite{LP}). Write $\varphi(z)=z^v \tilde{\varphi}(z)$, so
  $\tilde{\varphi}(0)\neq 0$.  Our motive is part of the middle
  cohomology of Euler's curve
  \[
y^N = x^A(1-x)^B (1-z^v\tilde{\varphi}(z)x)^C,
\]
where $A=-bN$, $B=(b-c)N$ and $C=aN$. The semistable model has two
components, namely
\[
\C_1:y^N=x^{A}(1-x)^B,
\]
and
\[
  \C_2:z^{v(A+B)}y^N = (-1)^Bx^{A+B}(1-\tilde{\varphi}(0)x)^C.
\]
The second curve is a twist by $\sqrt[N]{z^{cvN}}$ of a non-singular
curve (both curves have positive genus and complex
multiplication). Since $cv$ is not an integer, the action of inertia
at $z=0$ on $\C_2$ is non-trivial hence the only contribution comes
from $\C_1$. Let $\id{p}$ be a prime ideal of $F$ with norm $q$. Since
$\C_1$ has complex multiplication, the $1$-dimensional
contribution at a prime ideal $\id{p}$ equals (by \cite[Appendix A]{GPV})
\begin{equation}
  \label{eq:ap-0}
  -\frac{g(\psi,\gent^{bN})g(\psi,\gent^{(c-b)N})}{g(\psi,\gent^{cN})} =\JacMot((b,c-b),(c))(\id{p}).  
\end{equation}

The motive $\hgm$ (defined in \cite[Definition 6.8]{GPV}) is the twist of the contribution coming from Euler's curve times the character
\begin{equation}
  \label{eq:normalization}
\JacMot((-a,-b,c,1),(c-b,-a))^{-1}(\id{p})\gent(-1)^{bN} = \JacMot((-b,c)(c-b))^{-1}(\id{p})\gent(-1)^{bN}.  
\end{equation}
  Recall that if $\chi$ is a non-trivial character in $\FF_q^\times$,
  then
\begin{equation}
  \label{eq:inversion}
g(\psi,\chi)=\chi(-1)\frac{q}{g(\psi,\chi^{-1})}.
\end{equation}
Using the genericity condition and that $c$ is not an integer
we get the following relation for (\ref{eq:ap-0})
\begin{multline*}
  \JacMot((b,c-b)(c))(\id{p})=-\frac{g(\psi,\gent^{bN})g(\psi,\gent^{(c-b)N})}{g(\psi,\gent^{cN})}=  
  -\frac{\gent(-1)^{bN}g(\psi,\gent^{bN})g(\psi,\gent^{-cN})}{g(\psi,\gent^{(b-c)N})}=\\
  \JacMot((-b,c),(c-b))(\id{p})\gent(-1)^{bN}.
\end{multline*}
Then the product of (\ref{eq:ap-0}) and (\ref{eq:normalization}) equals $1$ as claimed.
\end{proof}

\begin{lemma}
  Let $a,b$ be rational numbers which are not integers and let $N$ be
  their least common denominator. Let $\varphi(z)$ be a rational
  function vanishing at $0$. Then for any prime ideal $\id{p}$ of $F$
  not dividing $N$, the trace of $\Frob_{\id{p}}$ on
  $\HGM((a,b),(1,1)|\varphi(z))$ specialized to $z=0$ equals $1$.
  \label{lemma:spec0-1}
\end{lemma}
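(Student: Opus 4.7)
The plan is to mirror the proof of Lemma~\ref{lemma:spec-0}, using the semistable model of Euler's curve
\[
y^N = x^{-bN}(1-x)^{(b-1)N}(1-\varphi(z)x)^{aN}
\]
(the same Euler curve as in the previous lemma, with $c$ now set equal to $1$) and combining the geometric contribution at $z=0$ with the normalization twist built into the definition of $\HGM$. Write $\varphi(z)=z^{v}\tilde{\varphi}(z)$ with $\tilde{\varphi}(0)\neq 0$, so that the special fiber of the semistable model at $z=0$ splits into the two components
\[
\C_{1}:\; y^{N}=x^{-bN}(1-x)^{(b-1)N},\qquad \C_{2}:\; z^{-vN}y^{N}=(-1)^{(b-1)N}x^{-N}(1-\tilde{\varphi}(0)x)^{aN},
\]
where I have used $A+B=-bN+(b-1)N=-N$ in the equation for $\C_{2}$.

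The first new feature is that the inertia argument from Lemma~\ref{lemma:spec-0} no longer applies: with $c=1$ we have $cv=v\in\ZZ$, so the twist $\sqrt[N]{z^{cvN}}=z^{v}$ is just a polynomial and inertia at $z=0$ acts trivially on $\C_{2}$. I would therefore analyze $\C_2$ directly by introducing $w=yx/z^{v}$, which converts the defining equation into $w^{N}=(-1)^{(b-1)N}(1-\tilde{\varphi}(0)x)^{aN}$. Since $\gcd(N,aN)=N$, this curve decomposes into $N$ rational components and contributes nothing to $H^{1}$; so $\C_2$ plays no role in the trace, despite being untwisted.

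The second new feature concerns $\C_{1}$: since $A+B\equiv 0\pmod{N}$, the standard ramification at $\infty$ disappears and the $\gent$-eigenspace formula of Lemma~\ref{lemma:spec-0} specializes to
\[
-\frac{g(\psi,\gent^{bN})\,g(\psi,\gent^{(1-b)N})}{g(\psi,\gent^{N})}=-\frac{g(\psi,\gent^{bN})\,g(\psi,\gent^{-bN})}{g(\psi,\mathbf 1)},
\]
since $\gent^{N}$ is the trivial character of $\FF_{q}^{\times}$ and $\gent^{(1-b)N}=\gent^{-bN}$. Using $g(\psi,\mathbf{1})=-1$ and the inversion identity $g(\psi,\chi)g(\psi,\chi^{-1})=\chi(-1)q$, this expression reduces to a scalar depending only on $\gent(-1)^{bN}$ and $q$. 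I would then multiply by the normalization character $\JacMot((-b,1),(1-b))^{-1}(\id{p})\,\gent(-1)^{bN}$, compute this Jacobi motive directly from Definition~\ref{def:jacobi-motive} (the exponent $\sum b_{j}-\sum a_{i}=0$ forces another factor of $g(\psi,\mathbf 1)=-1$), and observe that all Gauss sums and signs cancel to produce exactly $1$.

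The main obstacle is the bookkeeping around the degenerate Gauss sums and Tate twists: because both $c$ and $d$ equal $1$, several characters in the Jacobi sums become trivial, so one must replace the formal Gauss-sum ratios by their correct values $-1$ (rather than continuing to treat them as generic non-trivial Gauss sums) and simultaneously track the accompanying changes in dimension and weight. The qualitative picture—$\C_{2}$ drops out for geometric reasons and $\C_{1}$'s contribution is absorbed by the normalization—parallels Lemma~\ref{lemma:spec-0}, but the arithmetic cancellation is more delicate and is the step requiring the most care.
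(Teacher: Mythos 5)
There is a genuine gap, and it sits exactly where you yourself flag the ``step requiring the most care.'' When $c=d=1$ the $\gent$-eigenspace of $H^1$ of (the normalization of) your component $\C_1: y^N=x^{-bN}(1-x)^{(b-1)N}$ is \emph{zero-dimensional}: the exponents sum to $-N\equiv 0 \pmod N$, so the cover is unramified over $\infty$ and each component of the curve is rational. The Gauss-sum formula you import from Lemma~\ref{lemma:spec-0} is therefore not the trace of Frobenius on anything here; if you nevertheless evaluate it by setting $g(\psi,\mathbf 1)=-1$ and using the inversion identity, you get $\gent(-1)^{bN}\,q$, which after dividing by the normalization $\gent(-1)^{bN}$ leaves a spurious factor of $q$ rather than the claimed $1$. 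Substituting $-1$ for the degenerate Gauss sums does not repair this; the formula simply does not apply when the eigenspace dimension drops. (A smaller but real error: $\gcd(N,aN)=N$ would force $a\in\ZZ$, contradicting the hypothesis; $\C_2$ is a union of $N/M$ rational components, $M$ the exact denominator of $a$, not $N$ of them.)

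The missing idea, which is how the paper argues, is that since \emph{both} components of the semistable fiber are genus zero, the entire contribution to the motive at $z=0$ is the toric part of the Jacobian, i.e.\ $H^1$ of the dual graph of the special fiber tensored with the Steinberg representation. The two components meet in $N$ points defined over $F(\sqrt[N]{(-1)^{bN}})$, so Frobenius acts on the $\zeta_N$-eigenpart of this graph cohomology through the character of that extension, giving exactly $\gent(-1)^{bN}$ with no factor of $q$; this cancels against the normalization, which also equals $\gent(-1)^{bN}$ when $c=d=1$. Your write-up discards the dual graph entirely (``$\C_2$ contributes nothing to $H^1$'') and tries to keep a Jacobian contribution from $\C_1$, which is exactly backwards: the Jacobians of the components contribute nothing, and the graph contributes everything.
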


\begin{proof}
  The proof follows the idea of \cite{LP} (see \S 2 of loc. cit. for
  details).  Let $N$ be the least common multiple of the denominators
  of $a,b$ and let $F=\Q(\zeta_N)$. Set
  $\varphi(z)=z^v\tilde{\varphi}(z)$, with $\tilde{\varphi}(0)\neq
  0$. Euler's curve for this parameters has equation
  \[
    \C:y^N=x^{-bN}(\varphi(z)-x)^{bN}(1-x)^{aN}.
  \]
  Assume that $\gcd(N,bN)=\gcd(N,aN)=1$ (the other case is a little
  more technical, see \cite{LP}). The semistable model of $\C$ consists
  of the two (genus zero) irreducible curves
  \[
    \C_1:y^N=x^{-bN}(\tilde{\varphi}(0)-x)^{bN}.
  \]
  and
  \[
    \C_2:y^N=(-1)^{bN}(1-x)^{aN}
  \]
  The two curves intersect in $N$ points defined over the extension
  $F(\sqrt[N]{(-1)^{bN}})$ (independent of the function
  $\varphi$). The Jacobian of Euler's curve matches the first
  cohomology group of the just described component graph tensored with
  the $2$-dimensional Steinberg representation.  Then if $\id{p}$ is a
  prime ideal of $F$ not dividing $N$, the trace of Frobenius at the
  $\zeta_N$-eigenpart of the Jacobian matches the value of the
  character giving the extension $F(\sqrt[N]{(-1)^{bN}})/F$ at
  $\id{p}$, whose value is $\gent(-1)^{bN}$. Since $c=d=1$, the
  normalization (\ref{eq:normalization}) also takes the value takes
  the same value $\gent(-1)^{bN}$ hence the quotient is trivial.
\end{proof}

Recall the following transformation formula

\begin{proposition}
  The motives $\HGM((a,b),(c,d)|z)$ and $\HGM((-c,-d),(-a,-b)|z^{-1})$
  are isomorphic.
  \label{prop:switch}
\end{proposition}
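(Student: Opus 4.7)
The plan is to apply the combinatorial criterion stated immediately after Corollary~\ref{coro:unicity}: it suffices to verify (1) the monodromy (geometric) representations of $\HGM((a,b),(c,d)|z)$ and $\HGM((-c,-d),(-a,-b)|z^{-1})$ on the $z$-fundamental group of $\PP^1\setminus\{0,1,\infty\}$ are isomorphic, and (2) at some rational $z_0$ the finite hypergeometric sums coincide at all but finitely many primes of $F$. Genericity of $(a,b),(c,d)$ (which is clearly inherited by $(-c,-d),(-a,-b)$) supplies the irreducibility hypothesis of Corollary~\ref{coro:unicity}.

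For~(1), the map $\varphi(z)=z^{-1}$ fixes~$1$, swaps $0$ and~$\infty$, and is an orientation-preserving biholomorphism of $\PP^1$, so the induced action on $\pi_1$ exchanges the generators around $0$ and~$\infty$ and fixes the generator around~$1$. Formula~(\ref{eq:monodromy}) applied with $(a,b,c,d)$ replaced by $(-c,-d,-a,-b)$ gives that the monodromy of $\HGM((-c,-d),(-a,-b)|w)$ has eigenvalue sets $\{\exp(a),\exp(b)\}$ at $w=0$, $\{1,\exp(c+d-a-b)\}$ at $w=1$, and $\{\exp(-c),\exp(-d)\}$ at $w=\infty$. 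Pulling back by $\varphi$ therefore yields at $z=0,1,\infty$ the eigenvalue sets $\{\exp(-c),\exp(-d)\}$, $\{1,\exp(c+d-a-b)\}$, $\{\exp(a),\exp(b)\}$, which coincide with those of $\HGM((a,b),(c,d)|z)$. A short case check confirms the Jordan form is also preserved in the degenerate cases, since each of the conditions $c-d\in\ZZ$, $a-b\in\ZZ$, $a+b-c-d\in\ZZ$ is invariant under $(a,b,c,d)\leftrightarrow(-c,-d,-a,-b)$ together with the interchange of the roles of~$0$ and~$\infty$. Rigidity then yields the desired isomorphism of geometric representations.

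For~(2), fix any $z_0\in\QQ\setminus\{0,1\}$ and let $\id{p}$ be a prime of $F$ not dividing $N$ with $v_{\id{p}}(z_0(z_0-1))=0$. Expanding~(\ref{eq:finite-hyp}) for $H_{\id{p}}((-c,-d),(-a,-b)|z_0^{-1})$, the four numerator factors become $g(\psi,\gent^{cN}\omega)$, $g(\psi,\gent^{dN}\omega)$, $g(\psi,\gent^{-aN}\omega^{-1})$, $g(\psi,\gent^{-bN}\omega^{-1})$, the denominator consists of the four $\omega$-independent Gauss sums with exponents $cN,dN,-aN,-bN$, and the outer factor is $\omega(z_0^{-1})$. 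Performing the substitution $\omega\mapsto\omega^{-1}$ in the sum over characters of $\FF_q^{\times}$ swaps $\omega$ and $\omega^{-1}$ in every numerator factor, leaves the denominator untouched, and turns $\omega(z_0^{-1})$ into $\omega(z_0)$. Term by term the result is exactly $H_{\id{p}}((a,b),(c,d)|z_0)$, so the two finite sums agree identically.

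Combining~(1) and~(2) via Corollary~\ref{coro:unicity} proves the proposition. The only real obstacle is the orientation bookkeeping in~(1): one must check that although $\varphi'(z)=-z^{-2}$ carries a sign, $\varphi$ is nonetheless holomorphic and hence orientation-preserving on the real surface, so a small counterclockwise loop around $0$ in the $z$-plane really does correspond to a small counterclockwise loop around $\infty$ in the $w$-plane (and analogously for the fixed loop around~$1$).
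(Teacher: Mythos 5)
Your argument is correct, but it is not the paper's: the paper disposes of Proposition~\ref{prop:switch} with a one-line citation to \cite[Proposition~6.33]{GPV} and gives no argument of its own. What you supply instead is a self-contained derivation from the paper's internal machinery, and both halves check out. In part~(2), the substitution $\omega\mapsto\omega^{-1}$ really does carry the summand of $H_{\id{p}}((-c,-d),(-a,-b)|z_0^{-1})$ term by term onto that of $H_{\id{p}}((a,b),(c,d)|z_0)$ (the $\omega$-independent denominator is literally the same product of four Gauss sums, and $\omega^{-1}(z_0^{-1})=\omega(z_0)$), so the finite sums agree at every good prime and every good specialization --- more than the criterion after Corollary~\ref{coro:unicity} requires. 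In part~(1), your orientation remark is not mere bookkeeping but genuinely necessary: had the loop around $0$ been sent to the \emph{inverse} of the loop around $\infty$, the pulled-back eigenvalues would be $\{\exp(c),\exp(d)\}$ rather than $\{\exp(-c),\exp(-d)\}$ and the matching would fail; holomorphy of $z\mapsto 1/z$ rules this out. You also correctly note that genericity, hence irreducibility, is preserved under $(a,b,c,d)\mapsto(-c,-d,-a,-b)$, which is what licenses the appeal to Corollary~\ref{coro:unicity}. The trade-off is the usual one: the citation keeps the paper short and leans on \cite{GPV}, where the statement is presumably proved by an inversion in Euler's integral or an equivalent Gauss-sum manipulation, while your route makes the proposition a worked instance of the very two-step strategy (rigidity for the geometric monodromy, finite hypergeometric sums for the arithmetic twist) that the rest of the paper is advertising.
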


\begin{proof}
  See \cite[Proposition 6.33]{GPV}.
\end{proof}

The proposition gives results analogous to the previous lemmas but
specializing at $z=\infty$.

\begin{lemma}
  \label{lemma:special-value-infty}
  Let $a,b,c$ be rational numbers which are not integers. 
  \begin{enumerate}
  \item Let $N$ be the least common denominator of $a,b,c$. Let
    $\varphi(z)$ be a rational function, and $z_0$ a point on $\PP^1$
    where the function has a pole of order $v$. Then if
    $vc \not \in \ZZ$, for any prime ideal $\id{p}$ of $\Q(\zeta_N)$
    not dividing $N$, the trace of $\Frob_{\id{p}}$ acting on
    $\HGM((c,1),(a,b)|\varphi)$ specialized at $z_0$ equals $1$.
    
  \item Let $N$ be the least common denominator of $a,b$. Let
    $\varphi(z)$ be a rational function having a pole at $z_0$. Then
    for any prime ideal $\id{p}$ of $\Q(\zeta_N)$ not dividing $N$,
    the trace of $\Frob_{\id{p}}$ acting on
    $\HGM((1,1),(a,b)|\varphi(z))$ specialized at $z_0$ equals $1$.
    
  \end{enumerate}
\end{lemma}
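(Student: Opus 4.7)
The plan is to reduce both parts to the previously established Lemmas~\ref{lemma:spec-0} and~\ref{lemma:spec0-1} via the symmetry of Proposition~\ref{prop:switch}, exactly as the sentence preceding the statement suggests. For part (1), Proposition~\ref{prop:switch} provides the isomorphism
\[
\HGM((c,1),(a,b)|\varphi(z)) \simeq \HGM((-a,-b),(-c,-1)|\varphi(z)^{-1}).
\]
Since $-1$ is an integer, the right-hand side fits the shape $\HGM((a',b'),(c',1)|\psi(z))$ handled in Lemma~\ref{lemma:spec-0}, with $(a',b',c')=(-a,-b,-c)$ and $\psi(z)=\varphi(z)^{-1}$. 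Because $\varphi$ has a pole of order $v$ at $z_0$, the function $\psi$ has a zero of order exactly $v$ there, and a Möbius substitution sending $z_0$ to the origin reduces us to the setting of the lemma without changing the motive up to isomorphism over $F$.

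Lemma~\ref{lemma:spec-0} then applies provided $vc' = -vc \notin \ZZ$, which is equivalent to the hypothesis $vc \notin \ZZ$, and gives Frobenius trace equal to $1$ at the specialization. Part (2) is entirely analogous: the switch yields
\[
\HGM((1,1),(a,b)|\varphi(z)) \simeq \HGM((-a,-b),(-1,-1)|\varphi(z)^{-1}),
\]
both lower entries are integers, and Lemma~\ref{lemma:spec0-1} concludes after the same change of variable. Genericity of the new parameters is inherited from that of the original ones since $a,b$ are assumed non-integral.

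The main point to verify, and the only real obstacle, is that replacing the integer $-1$ in the lower row by $1$ (respectively $(-1,-1)$ by $(1,1)$) does not disturb the motive, only its monodromy. This reduces to inspecting the normalization twist (\ref{eq:normalization}) and observing that $\gent^{kN}=1$ for $k\in\ZZ$, so integer shifts of the lower parameters leave both the numerator and denominator Jacobi factors unchanged. Once this compatibility is in place, both reductions are immediate.
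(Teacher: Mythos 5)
Your proposal is correct and is exactly the argument the paper intends: the paper gives no explicit proof of this lemma, saying only that Proposition~\ref{prop:switch} yields the analogues of Lemmas~\ref{lemma:spec-0} and~\ref{lemma:spec0-1} at poles, which is precisely your reduction (switch the parameters, invert $\varphi$ so the pole becomes a zero, note that the lower parameters $-1$ resp.\ $(-1,-1)$ agree with $1$ resp.\ $(1,1)$ modulo $\ZZ$, and check $vc'=-vc\notin\ZZ$). Your closing observation that integer translation of parameters leaves the motive unchanged, together with the preservation of genericity under negation, covers the only points requiring verification.
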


At last, we have the following result regarding specializations at $z=1$.

\begin{lemma}
  \label{lemma:spec-1}
  Let $(a,b),(c,d)$ be rational generic parameters such that
  $a+b-c-d \not \in \Z$. Then for any prime ideal $\id{p}$ of $F$ not
  dividing $N$, the
    trace of $\Frob_{\id{p}}$ acting on
    $\HGM((a,b),(c,d)|1)$ equals
  \begin{equation}
    \label{eq:formula}
    \JacMot((a-c,a-d,b-c,b-d),(a,b,-c,-d,a+b-c-d))(\id{p}).
  \end{equation}
\end{lemma}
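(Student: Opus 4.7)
The plan is to compute the Frobenius trace by evaluating the finite hypergeometric sum (\ref{eq:finite-hyp}) at $z=1$. The correspondence between this sum and the Frobenius trace on $\HGM((a,b),(c,d)|z)$ for generic $z$ is \cite[Theorem 6.29]{GPV}; at the singular value $z=1$ one interprets the specialization via the semistable reduction of Euler's curve, in the style of Lemmas~\ref{lemma:spec-0}--\ref{lemma:special-value-infty}. Setting $z=1$ kills the $\omega(z)$ factor and reduces the trace to
\[
H_{\id{p}}((a,b),(c,d)|1) = \frac{1}{1-q} \cdot \frac{S}{g(\psi,\gent^{-aN}) g(\psi,\gent^{-bN}) g(\psi,\gent^{cN}) g(\psi,\gent^{dN})},
\]
where $S := \sum_\omega g(\psi,\gent^{-aN}\omega) g(\psi,\gent^{-bN}\omega) g(\psi,\gent^{cN}\omega^{-1}) g(\psi,\gent^{dN}\omega^{-1})$. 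The hypothesis $a+b-c-d \notin \Z$ guarantees that the product character $\gent^{(c+d-a-b)N}$ is non-trivial, which will be the crucial non-degeneracy condition in the next step.

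The key input is the finite-field analogue of Barnes' first lemma: when $\chi_1\chi_2\chi_3\chi_4$ is non-trivial,
\[
\sum_\omega g(\psi,\chi_1\omega) g(\psi,\chi_2\omega) g(\psi,\chi_3\omega^{-1}) g(\psi,\chi_4\omega^{-1}) = C_q \cdot \frac{g(\psi,\chi_1\chi_3) g(\psi,\chi_1\chi_4) g(\psi,\chi_2\chi_3) g(\psi,\chi_2\chi_4)}{g(\psi,\chi_1\chi_2\chi_3\chi_4)},
\]
where $C_q$ is an explicit constant involving $q$ and a sign. This identity can be extracted from Greene's or McCarthy's work on finite hypergeometric functions, or derived directly by expanding both sides, interchanging summations via character orthogonality, and reducing to a Jacobi-sum evaluation on the variety $\{x_1 x_2 = x_3 x_4\} \subset (\FF_q^\times)^4$.

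Substituting this evaluation into the expression for $H_{\id{p}}((a,b),(c,d)|1)$, then flipping each of the four numerator Gauss sums $g(\psi, \gent^{(c-a)N})$, $g(\psi, \gent^{(d-a)N})$, $g(\psi, \gent^{(c-b)N})$, $g(\psi, \gent^{(d-b)N})$ via the inversion formula (\ref{eq:inversion}), produces the Jacobi motive $\JacMot((a-c,a-d,b-c,b-d),(a,b,-c,-d,a+b-c-d))(\id{p})$ from Definition~\ref{def:jacobi-motive}. The fact that $\gent(-1)^2 = \gent(1) = 1$ forces $\gent(-1) \in \{\pm 1\}$, so the accumulated $\gent(-1)^\bullet$ factors from the four inversions collapse (their exponents summing to a multiple of $2N$), while the accumulated powers of $q$ cancel against $C_q$ and the overall $(-1)^{r+s+1}$ normalization in the Jacobi-motive definition.

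The main obstacle is bookkeeping: pinning down the exact constant $C_q$ with correct sign in the finite-field Barnes identity, and then verifying that the signs from four applications of (\ref{eq:inversion}) combine correctly with the $(-1)^{r+s+1}$ factor and with the ``extra'' Gauss sum $g(\psi, \gent^{N\sum b_j - N\sum a_i}) = g(\psi,1) = -1$ appearing in Definition~\ref{def:jacobi-motive} (note $\sum b_j - \sum a_i = 0$ for our multisets). The underlying Gauss-sum manipulations are of the same type used to derive (\ref{eq:finite-hyp}) in \cite{GPV}, so no conceptually new techniques are needed, only careful sign-tracking.
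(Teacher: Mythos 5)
Your route — substitute $z=1$ into the finite sum (\ref{eq:finite-hyp}) and evaluate it with a finite-field Barnes/Gauss-summation identity — is genuinely different from the paper's, which never touches the character sum at $z=1$: it computes the trace directly on the semistable model of Euler's curve $y^N=x^A(1-x)^B(1-zx)^Cz^D$ at $z=1$, shows that the hypothesis $a+b-c-d\notin\Z$ forces the component containing the vanishing cycle to acquire non-trivial inertia (hence to drop out), and identifies the surviving one-dimensional CM contribution as a Jacobi motive, which is then divided by the normalizing twist. The gap in your version is precisely the step you dispose of in one clause: the identification of $\operatorname{tr}\Frob_{\id{p}}$ on $\HGM((a,b),(c,d)|z)$ with $H_{\id{p}}((a,b),(c,d)|z)$ is only supplied by \cite[Theorem 6.29]{GPV} under the condition $v_{\id{p}}(z(z-1))=0$, which excludes $z=1$. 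At $z=1$ the motive degenerates to rank one (only the local-monodromy-invariant line survives), and its Frobenius trace is \emph{defined} through the semistable model; the naive value of the character sum at $z=1$ is a different object a priori, since it also sees the contribution of the collapsing direction. Saying that one ``interprets the specialization via the semistable reduction in the style of Lemmas~\ref{lemma:spec-0}--\ref{lemma:special-value-infty}'' is not a reduction to those lemmas — that interpretation \emph{is} the entire content of the proof, and it is exactly what the paper carries out.

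This is not merely a foundational quibble: it shows up in your formulas. The finite-field Barnes lemma you invoke (Helversen--Pasotto) does not take the clean form $C_q\cdot\prod g/g$; it carries an additional additive term, and that term is precisely the Frobenius contribution of the discarded component/vanishing cycle — i.e.\ the discrepancy between the singular-fiber character sum and the limit-motive trace. So the ``main obstacle'' is not sign bookkeeping in a multiplicative constant $C_q$: there is an additive correction that must be identified and subtracted, and justifying \emph{which} part of the sum computes the trace on $\HGM((a,b),(c,d)|1)$ requires the geometric degeneration argument anyway. To repair the proof you should either (i) carry out the semistable-model computation as in \cite[Theorem A.3]{GPV} (the paper's route), or (ii) prove a precise statement of the form ``$H_{\id{p}}((a,b),(c,d)|1) = \operatorname{tr}\Frob_{\id{p}}|_{\HGM((a,b),(c,d)|1)} + E_{\id{p}}$'' with $E_{\id{p}}$ explicitly matched to the extra term in the Helversen--Pasotto identity, and only then extract the Jacobi motive $\JacMot((a-c,a-d,b-c,b-d),(a,b,-c,-d,a+b-c-d))(\id{p})$ from the main term.
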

\begin{proof}
  The proof follows the lines of \cite[Theorem A.3]{GPV}. We consider the equation
  \[
\C: y^N=x^A(1-x)^B(1-zx)^C z^D,
    \]
    where $A=(d-b)N$, $B=(b-c)N$, $C=(a-d)N$ and $D=dN$.  The
    semistable model has two components that intersect in a single
    point, with equations (setting $x=1-(z-1)x'$)
  \[
    \C_1:y^N = x^A(1-x)^{B+C}, \qquad \C_2: y^N(z-1)^{-B-C} =
    (1-(z-1)x')^Ax'^B (1+zx')^Cz^D.
  \]
  The hypothesis $a+b-c-d \not \in \ZZ$ implies that $B+C=(a+b-c-d)N$
  is not divisible by $N$, so the curve $\C_2$ has singular reduction
  (it attains good reduction over the extension $\sqrt[N]{z-1}$), so
  it does not contribute to the trace. The reduction of $\C_1$
  is the curve
    \[
y^N = x^{(d-b)N}(1-x)^{(a+b-c-d)N},
\]
whose new part contributes equals
$\JacMot((b-d,c+d-a-b),(c-a))(\id{p})$. Dividing this quantity by the
normalization and using (\ref{eq:inversion}), we obtain the equality
\begin{eqnarray*}
  H_{\id{p}}((a,b),(c,d)|1)&=&\gent(-1)^{(d-b)N}\frac{J((b-d,c+d-a-b),(c-a))(\id{p})}{J((-a,-b,c,d),(c-b,d-a))(\id{p})}=\\
  &=&J((b-c,b-d,a-c,a-d),(a,b,-c,-d,a+b-c-d)).
\end{eqnarray*}
\end{proof}

\begin{lemma}
  \label{lemma:spec-1-1}
  Let $(a,b),(c,d)$ be rational generic parameters such that $a+b$ and
  $c+d$ are integers. Then for any prime ideal $\id{p}$ of $F$ not
  dividing $N$, the trace of $\Frob_{\id{p}}$ acting on
  $\HGM((a,b),(c,d)|1)$ equals $\normid{p}^{\delta}$, where
  \[
\delta =
\begin{cases}
  0& \text{ if } a \in \ZZ \text{ or }c \in \ZZ,\\
  1 & \text{ otherwise}.
\end{cases}
    \]
\end{lemma}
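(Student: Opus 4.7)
The plan is to follow the strategy of Lemma \ref{lemma:spec-1}, analyzing the semistable reduction of Euler's curve $\C: y^N = x^A(1-x)^B(1-zx)^C z^D$ (with $A=(d-b)N$, $B=(b-c)N$, $C=(a-d)N$, $D=dN$) at $z=1$. The components $\C_1: y^N = x^A(1-x)^{B+C}$ and $\C_2: y^N(z-1)^{-B-C} = (1-(z-1)x')^A x'^B(1+zx')^C z^D$ meet transversely at a single point. The new hypothesis $a+b, c+d \in \ZZ$ forces $(B+C)/N = a+b-c-d \in \ZZ$, so the twist factor $(z-1)^{-B-C}$ on $\C_2$ becomes a perfect $N$-th power and $\C_2$ now also has good reduction. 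Unlike in Lemma \ref{lemma:spec-1}, the contribution of $\C_2$ must now be tracked and added to that of $\C_1$.

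The genericity hypothesis excludes the simultaneous integrality of all four parameters, so combined with $a+b, c+d \in \ZZ$ one obtains three mutually exclusive sub-cases: (A) $a, b \in \ZZ$ (forcing $c, d \notin \ZZ$); (B) $c, d \in \ZZ$ (forcing $a, b \notin \ZZ$); and (C) none of $a, b, c, d$ is an integer. Proposition \ref{prop:switch} sends $((a,b),(c,d))$ at $z = 1$ to $((-c,-d),(-a,-b))$ at $z^{-1} = 1$, reducing case (B) to case (A). It therefore suffices to verify the formula in cases (A) and (C).

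In case (A), the characters $\gent^{-aN}$ and $\gent^{-bN}$ entering the Gauss sum contributions of $\C_1$ and $\C_2$ (computed by the method of \cite[Appendix A]{GPV}) are trivial, so multiple factors collapse via $g(\psi, \mathbf{1}) = -1$; combining the surviving Gauss sums with the normalization (\ref{eq:normalization}) through the inversion identity (\ref{eq:inversion}) yields $1$, matching $\delta = 0$. In case (C), both components produce genuine Jacobi motive contributions indexed by $a, b, c, d$ and their differences; after summing them and dividing by (\ref{eq:normalization}), the trivial-character evaluations $g(\psi, \mathbf{1}) = -1$ coming from the integer exponents $a+b$, $c+d$, and $a+b-c-d$ combine with Gauss-sum pairings $g(\psi, \chi)g(\psi, \chi^{-1}) = \chi(-1)\normid{p}$ produced by (\ref{eq:inversion}) to yield $\normid{p}$, matching $\delta = 1$. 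The main obstacle is case (C): several characters become trivial at once, and the accompanying signs $\gent(-1)^{\bullet}$ arising from (\ref{eq:inversion}) must be tracked with care to confirm that they cancel exactly, leaving $\normid{p}$ on the nose rather than a nontrivial root of unity times $\normid{p}$.
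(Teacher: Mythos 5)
Your high-level strategy is the right one (semistable reduction of Euler's curve at $z=1$, observing that the hypothesis $a+b-c-d\in\ZZ$ removes the $\sqrt[N]{z-1}$ obstruction on $\C_2$, then dividing by the normalization), and your reduction of case (B) to case (A) via Proposition~\ref{prop:switch} is fine. But your description of the special fibre is wrong in a way that derails the computation. Since $B+C=(a+b-c-d)N\equiv 0\pmod N$, the component $\C_1:y^N=x^A(1-x)^{B+C}$ is, after absorbing the perfect $N$-th power $(1-x)^{B+C}$ into $y$, just $y^N=x^A$, which has genus zero; the same happens to $\C_2$. This holds in \emph{all} of your cases (A), (B), (C), not only when some parameter is an integer. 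So there are no ``genuine Jacobi motive contributions'' from the components to be summed: a genus-zero component contributes nothing to $H^1$, and a sum of two weight-one Jacobi motives could not in any case equal $1$ or $\normid{p}$. Relatedly, the two components do \emph{not} meet in a single point as in Lemma~\ref{lemma:spec-1}: precisely because the relevant local exponent at the node is divisible by $N$, they meet in $N$ points rational over $F$. The entire contribution therefore comes from the resulting toric (Steinberg) part of the degenerate Jacobian, not from the components, and your plan of collapsing Gauss sums component by component has nothing to act on.

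The paper's proof avoids your three-way casework entirely. It first normalizes $a=-b$, $c=-d$ (legitimate since the motive depends on the parameters only modulo $\ZZ$), exhibits the two genus-zero components meeting in $N$ points over $F$, and then observes that the whole answer is carried by the single normalization factor
\[
\JacMot((-a,a,-c,c),(a+c,-a-c))=\frac{g(\psi,\gent^{-aN})g(\psi,\gent^{aN})\,g(\psi,\gent^{-cN})g(\psi,\gent^{cN})}{g(\psi,\gent^{(a+c)N})g(\psi,\gent^{-(a+c)N})},
\]
which the inversion formula~(\ref{eq:inversion}) evaluates to $\normid{p}^{\delta}$: the denominator always gives $\normid{p}$ (genericity forces $a+c\notin\ZZ$), each numerator pair gives $\normid{p}$ or $1$ according as $a$ (resp.\ $c$) is non-integral or integral, and the signs $\gent(-1)^{aN}\gent(-1)^{cN}\gent(-1)^{-(a+c)N}$ cancel identically. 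So the dichotomy in $\delta$ falls out of a single two-line Gauss-sum identity rather than from tracking degenerating characters case by case. To repair your write-up you would need to (i) correct the geometry of the fibre (two rational components, $N$ intersection points, purely toric $H^1$), and (ii) replace the ``sum of component contributions'' by the evaluation of the normalization Jacobi motive above.
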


\begin{proof}
  The proof is similar to that of
  Lemma~\ref{lemma:special-value-infty}. We can assume without loss of
  generality that $a=-b$ and $c=-d$. Set $A=(a-c)N$, $B=-(a+c)N$,
  $C=(a+c)N=-B$ and $D=-cN$, so we need to study the reduction at $z=1$ of Euler's curve
  \[
\C:y^N=x^A(1-x)^B(1-zx)^{-B}z^{-cN}.
\]
One component is given by its reduction modulo $z-1$, namely the curve
\[
\C_1: y^N=x^A.
\]
To get the other component, let $x'=\frac{(1-x)}{(z-1)}$, so the equation for $\C$ transforms into
\[
y^N=(1-(z-1)x')^Ax'^B(zx'-1)^{-B}z^{-cN}.
\]
Its reduction modulo $z-1$ gives the curve
\[
\C_2:y^N=x'^B(x'-1)^{-B}.
\]
Both curves $\C_1$ and $\C_2$ have genus zero, and intersect at $N$
points defined over $F$, giving just the classical Steinberg
representation. To prove the formula we need to compute the
contribution from the Jacobi motive.
\[
\JacMot((-a,a,-c,c),(a+c,-a-c))=\frac{g(\psi,\gent^{-aN})g(\psi,\gent^{aN})g(\psi,\gent^{-cN})g(\psi,\gent^{cN})}{g(\psi,\gent^{(a+c)N})g(\psi,\gent^{-(a+c)N})}.
\]
Recall from~(\ref{eq:inversion}) that
\[
  g(\psi,\chi)g(\psi,\chi^{-1}) =\chi(-1) \cdot
  \begin{cases}
    \normid{p}& \text{ if }\chi \neq 1,\\
    1 & \text{otherwise}.
  \end{cases}
  \]
  Let $\delta$ as in the statement. Since the parameters are generic,
  $a+c \not \in \ZZ$, so
\[
  \JacMot((-a,a,-c,c),(a+c,-a-c))=\normid{p}^\delta.
  \]
\end{proof}

\section{Kummer's $S_4$ transformations}
\label{section:Kummer}
In \cite[\S 8]{MR1578088}, Kummer listed twenty four transformations
$\{T_i\}_{i=1}^{24}$ that send a solution of the differential
equation~(\ref{eq:diff-eq}) to another solution. For each
transformation he wrote down the explicit relations satisfied between
the corresponding hypergeometric functions. In this section we compute
the analogous relations between the hypergeometric motives.

\begin{definition}
  Let $\alpha=\frac{r}{N}$ be a rational number, with $r,N$ coprime
  integers. Define the character
  $$\theta_\alpha:\Gal(\overline{\Q(z)}/\Q(z,\zeta_N)) \to \overline{\QQ}^\times,$$
  to be the character that factors through
  $\Gal(\QQ(\sqrt[N]{z},\zeta_N)/\QQ(z,\zeta_N))$ whose value at $\sigma$ equals
  \[
\theta_{\alpha}(\sigma) = \left(\frac{\sigma(\sqrt[N]{z})}{\sqrt[N]{z}}\right)^r.
\]
\label{def:theta}
\end{definition}
The character is unramified outside $\{0,\infty\}$. Let
$z_0 \in \Q \setminus \{0\}$ and let $\id{p}$ be a prime ideal of
$\Q(\zeta_N)$ of norm $q$ not dividing $Nz_0$. Then an easy
computation (see \S 6.5 of \cite{GPV}) shows that the specialization of
$\theta_{\alpha}$ at $z_0$ satisfies
\begin{equation}
  \label{eq:eta-weil}
\theta_{\alpha}(\Frob_{\id{p}})= \gent(z_0)^r.
\end{equation}

Similarly, for each rational number $\alpha$ there is a character ramified only
at $\{1,\infty\}$.
\begin{definition}
  Let $\alpha=\frac{r}{N} \in \Q$ as before. Let
  $\eta_\alpha : \Gal(\overline{\Q(z)}/\Q(z,\zeta_N))\to
  \overline{\Q}^\times$ be the character defined by 
  \begin{equation}
    \label{eq:chi-def}
    \eta_\alpha(\sigma)=\left(\frac{\sigma(\sqrt[N]{1-z})}{\sqrt[N]{1-z}}\right)^r.
  \end{equation}
\label{def:eta}
\end{definition}
\begin{theorem} Let $a,b,c$ be rational numbers satisfying that
  $(a,b),(c,1)$ is generic. Let $N$ denote their least common
  denominator and let $F=\Q(\zeta_N)$.  Kummer's transformations imply
  the equality of the following twenty four hypergeometric motives
  defined over $F$.
  \begin{enumerate}[i)]
  \item $\HGM((a,b),(c,1)|z)$,
    
  \item $\HGM((c-a,c-b),(c,1)|z)\otimes \eta_{c-a-b}$,
    
  \item $\HGM((a-c,b-c),(-c,1)|z)\otimes \JacMot((a-c,b-c,c),(a,-c,b))\theta_{-c}$,
    
  \item $(-1)^{c}\HGM((-a,-b),(-c,1)|z)\otimes \JacMot((a-c,-b,c),(a,-c,c-b))\theta_{-c}\eta_{c-a-b}$,
    
  \item $(-1)^a\HGM((a,b),(a+b-c,1)|1-z)\otimes \JacMot((a-c,c-a-b),(-c,c-b))$,
  \item $(-1)^{b+c}\HGM((a-c,b-c),(a+b-c,1)|1-z)\otimes \JacMot((a-c,b-c,c-a-b),(a,-c,-a))\theta_{-c}$,
    
  \item $(-1)^{a}\HGM((c-a,c-b),(c-a-b,1)|1-z)\otimes \JacMot((a-c,c-a,a+b-c),(a,-c,b))\eta_{c-a-b}$,
    
  \item $(-1)^{b+c}\HGM((-a,-b),(c-a-b,1)|1-z)\otimes \JacMot((-b,a+b-c),(a,-c))\theta_{-c}\eta_{c-a-b}$,
    
  \item $\HGM((a,a-c),(a-b,1)|\frac{1}{z})\otimes \JacMot((a-c,b-a),(b,-c))\theta_{-a}$,
    
  \item $\HGM((b,b-c),(b-a,1)|\frac{1}{z})\otimes \JacMot((b-c,a-b),(a,-c))\theta_{-b}$,
    
  \item $(-1)^{c}\HGM((-a,c-a),(b-a,1)|\frac{1}{z})\otimes \JacMot((a-c,c-a,a-b),(a,-c,c-b))\theta_{a-c}\eta_{c-a-b}$,
    
  \item $(-1)^{c}\HGM((-b,c-b),(a-b,1)|\frac{1}{z})\otimes \JacMot((b-c,c-b,b-a),(b,-c,c-a))\theta_{b-c}\eta_{c-a-b}$,
    
  \item $(-1)^a\HGM((a,c-b),(a-b,1)|\frac{1}{1-z})\otimes \JacMot((a-c,b-a),(b,-c))\eta_{-a}$,
    
  \item $(-1)^b\HGM((b,c-a),(b-a,1)|\frac{1}{1-z})\otimes \JacMot((b-c,a-b),(a,-c))\eta_{-b}$,
    
  \item $(-1)^{b+c}\HGM((a-c,-b),(a-b,1)|\frac{1}{1-z})\otimes \JacMot((a-c,-b,b-a),(a,-c,-a))\theta_{-c}\eta_{c-a}$,
    
  \item $(-1)^{a+c}\HGM((b-c,-a),(b-a,1)|\frac{1}{1-z})\otimes \JacMot((b-c,-a,a-b),(b,-c,-b))\theta_{-c}\eta_{c-b}$,
    
  \item $\HGM((a,c-b),(c,1)|\frac{z}{z-1})\otimes \eta_{-a}$,
    
  \item $\HGM((b,c-a),(c,1)|\frac{z}{z-1})\otimes \eta_{-b}$,
    
  \item $(-1)^c\HGM((a-c,-b),(-c,1)|\frac{z}{z-1})\otimes \JacMot((a-c,-b,c),(a,-c,c-b))\theta_{-c}\eta_{c-a}$,
    
  \item $(-1)^c\HGM((b-c,-a),(-c,1)|\frac{z}{z-1})\otimes \JacMot((b-c,-a,c),(b,-c,c-a))\theta_{-c}\eta_{c-b}$,
    
  \item $(-1)^a\HGM((a,a-c),(a+b-c,1)|\frac{z-1}{z})\otimes \JacMot((a-c,c-a-b),(c-b,-c))\theta_{-a}$,
    
  \item $(-1)^b\HGM((b,b-c),(a+b-c,1)|\frac{z-1}{z})\otimes \JacMot((b-c,c-a-b),(c-a,-c))\theta_{-b}$,
    
  \item $(-1)^{a}\HGM((-a,c-a),(c-a-b,1)|\frac{z-1}{z})\otimes \JacMot((a-c,c-a,a+b-c),(a,-c,b))\theta_{a-c}\eta_{c-a-b}$,
    
  \item $(-1)^{b}\HGM((-b,c-b),(c-a-b,1)|\frac{z-1}{z})\otimes \JacMot((b-c,c-b,a+b-c),(b,-c,a))\theta_{b-c}\eta_{c-a-b}$,
  \end{enumerate}
  where $(-1)^a$ is the quadratic character of $F$ whose value at a prime ideal $\id{p}$ equals $\gent(-1)^{Na}$.
\end{theorem}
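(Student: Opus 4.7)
The plan is to apply Corollary~\ref{coro:unicity} twenty three times, comparing each motive in the list to the first one $\HGM((a,b),(c,1)|z)$. In each case two steps are required: first, verify that the geometric monodromy representations agree, and second, identify the resulting arithmetic twist by comparing Frobenius traces at a singular specialization.

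For the geometric comparison, observe that the six rational arguments appearing in the list, namely $z,\,1-z,\,1/z,\,1/(1-z),\,z/(z-1)$ and $(z-1)/z$, form the orbit of $z$ under the anharmonic $S_3$-action on $\PP^1\setminus\{0,1,\infty\}$ and hence each permutes the three singular points in a prescribed manner. For each argument I would write down the three conjugacy classes of monodromy matrices using~(\ref{eq:monodromy}) applied to the new parameters and check, after tensoring with the appropriate power of $\theta_\bullet$ or $\eta_\bullet$, that they coincide with the classes of $M_0,M_1,M_\infty$ for the first motive. The characters $\theta_\alpha$ and $\eta_\alpha$ of Definitions~\ref{def:theta} and~\ref{def:eta} are designed to produce precisely the ramification and eigenvalue adjustments forced by the change of variable (they encode the factors $z^{\alpha}$ and $(1-z)^{\alpha}$ multiplying Kummer's explicit solutions). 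Rigidity (property (3) in the introduction) then upgrades the equality of conjugacy classes to an isomorphism of geometric representations.

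Once geometric isomorphism is in hand, Corollary~\ref{coro:unicity} says each of the twenty three motives equals $\HGM((a,b),(c,1)|z)$ twisted by a \emph{constant} character of $\Gal(\overline{\Q}/F)$, independent of $z$. To pin down this constant I would specialize at a point where both motives are controlled by the lemmas of \S\ref{sec:Special}. For cases ii)--iv) and xvii)--xx) the argument vanishes (respectively has a pole) at $z=0$, so Lemma~\ref{lemma:spec-0} and Lemma~\ref{lemma:special-value-infty} both apply to give Frobenius trace $1$ on one side; comparison with $\HGM((a,b),(c,1)|0)$, also $1$ by Lemma~\ref{lemma:spec-0}, forces the twist to equal the required Jacobi and Kronecker characters evaluated at $\Frob_{\id{p}}$. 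For cases v)--viii) and xxi)--xxiv), where the new argument is $1-z$ or $(z-1)/z$ (vanishing or blowing up at $z=1$), I would instead specialize at $z=1$ and use Lemma~\ref{lemma:spec-1} (with Lemma~\ref{lemma:spec-1-1} handling the corner where $a+b-c-d\in\ZZ$), solving the identity $H_{\id{p}}((a,b),(c,1)|1)=\chi(\Frob_{\id{p}}) H_{\id{p}}(\text{new params}|0)$ for the twisting character $\chi$. Cases ix)--xvi), involving arguments $1/z$ and $1/(1-z)$, reduce to the previous cases by first applying Proposition~\ref{prop:switch} to move $\infty$ to $0$.

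The main obstacle I foresee is purely bookkeeping. The Jacobi factors in the theorem arise from two sources that must be combined carefully: the ``geometric'' contribution of the semistable degeneration computed in Lemmas~\ref{lemma:spec-0}--\ref{lemma:spec-1-1}, and the normalization factor~(\ref{eq:normalization}) built into the definition of $\HGM$ in~\cite{GPV}, which differs for each choice of parameters. Repeated use of the inversion relation~(\ref{eq:inversion}) will be required to rewrite the resulting ratios of Gauss sums in the symmetric form given in the statement, and each such inversion introduces a sign character $\gent(-1)^{N\cdot\bullet}$ which accumulates into the $(-1)^a$, $(-1)^b$, $(-1)^c$ factors. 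A secondary concern is that the hypotheses of Lemma~\ref{lemma:spec-0} (the non-integrality of $vc$) and Lemma~\ref{lemma:spec-1} ($a+b-c-d\notin\ZZ$) may fail for non-generic choices of $(a,b,c)$; in those degenerate corners one would fall back on the companion Lemmas~\ref{lemma:spec0-1} and~\ref{lemma:spec-1-1}, whose formulas agree with the generic ones by continuity of the Jacobi motive expressions.
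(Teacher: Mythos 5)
Your strategy is genuinely different from the paper's. The paper does not argue via monodromy and singular specializations at all for this theorem: it works directly with the finite character sum $H(z)=\sum_{x\in\FF_q}\varepsilon(x)\omega(1-x)\chi^{-1}(1-zx)$ of \eqref{eq:H-defi}, which by \eqref{eq:factorization} equals $H_{\id{p}}((a,b),(c,1)|z)$ up to an explicit Gauss-sum prefactor. Each of Kummer's twenty-four transformations is then realized as a substitution $x=f(y)$ (Dwork's table, reordered in Table~\ref{table:characters}) inside this sum; the characters $\theta_\bullet$ and $\eta_\bullet$ appear as the values pulled outside the summation, and the Jacobi factors and the signs $(-1)^a$ come from comparing the prefactors of \eqref{eq:factorization} for the two parameter sets via \eqref{eq:inversion}. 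This proves the trace identity at every good prime and every specialization $z\in\FF_q$ simultaneously, so the isomorphism of motives follows with no monodromy computation and no appeal to Corollary~\ref{coro:unicity}. Your route is the one the paper reserves for Section 4 (Theorem~\ref{theorem:isom}); it buys conceptual transparency (the twist must be a constant character) at the price of twenty-three separate monodromy verifications, including the Jordan-block degenerations in \eqref{eq:monodromy} when $c$, $a-b$ or $a+b-c$ is an integer, whereas the paper's computation is uniform in the parameters and produces the exact twisting factors mechanically.

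Beyond the extra labor, there is a concrete gap in your plan to pin down the twist by singular specialization. The theorem assumes only that $(a,b),(c,1)$ is generic, which permits, for example, $c\in\ZZ$ with $a+b\notin\ZZ$, or $a+b-c\in\ZZ$ with $c\notin\ZZ$. In such corners the lemmas of \S\ref{sec:Special} do not cover both sides of your comparison: Lemma~\ref{lemma:spec-1} needs $a+b-c-d\notin\ZZ$, while its companion Lemma~\ref{lemma:spec-1-1} needs $a+b$ and $c+d$ \emph{separately} integral, so the specialization at argument $1$ required in cases v)--viii) is not computed by any stated lemma when $a+b-c\in\ZZ$ but $c\notin\ZZ$. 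Your fallback ``by continuity of the Jacobi motive expressions'' is not available: Gauss sums are not continuous in the character ($g(\psi,1)=-1$ while $|g(\psi,\chi)|=\sqrt{q}$ for $\chi\neq1$), and indeed Lemma~\ref{lemma:spec-1-1} returns $\normid{p}^{\delta}$ rather than the Jacobi motive of Lemma~\ref{lemma:spec-1}. To complete your argument you would either have to prove additional specialization lemmas for these mixed degenerate cases or shrink the hypotheses --- neither of which the paper's change-of-variables proof requires.
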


\begin{proof}
  For $\varepsilon,\omega,\chi$ characters of $\FF_q$ and $z \in\FF_q$, define the function
  \begin{equation}
    \label{eq:H-defi}
    H(z)=\sum_{x \in \FF_q} \varepsilon(x) \omega(1-x)\chi^{-1}(1-zx).
  \end{equation}
  Setting $\varepsilon = \gent^{aN}$, $\omega = \gent^{(c-a)N}$ and
  $\chi = \gent^{bN}$, it follows from \cite[Theorem 4.10]{GPV} that
  \begin{equation}
    \label{eq:factorization}
    H(z) =
    \frac{\varepsilon(-1)g(\psi,\varepsilon)g(\psi,\varepsilon^{-1}\omega^{-1})}{g(\psi,\omega^{-1})}
    H_{\id{p}}((a,b),(c,1)|z).
  \end{equation}
  The function $H(z)$ is a finite version of the integral defining the
  hypergeometric series. The stated transformation results for $H(z)$
  follow from applying each change of variables suggested in
  \cite{MR752488} (Table 1). The order of the transformations listed
  in Dwork's table is not the same as Kummer's original article, so to
  avoid confusions, in Table~\ref{table:characters} we reordered the
  entries of Dwork's table. The change of variables corresponds to set
  $x=f(y)$ for each entry. Although this transformation might not be
  well defined at one of the points $\{0,1,1/z\}$ the term in the
  definition of $H(z)$ takes the value $0$ at all such points.

\begin{table}
  \begin{tabular}{|c|c||c|c||c|c||c|c|}
    \hline
    $1$ & $y$ & $7$ & $\frac{1}{1+y(z-1)}$ & $13$ & $\frac{y}{y+z-1}$ & $19$ & $\frac{1}{z(1-y)}$\\
\hline
    $2$ & $\frac{1-y}{1-zy}$ & $8$ & $\frac{1+y(z-1)}{z}$ & $14$ & $\frac{1}{1-y}$ & $20$ & $1+\frac{1-z}{zy}$\\
\hline
    $3$ & $\frac{1}{zy}$ & $9$ & $\frac{y}{z}$ & $15$ & $\frac{1-y}{z}$ & $21$ & $\frac{y}{z(y-1)}$\\
\hline
    $4$ & $\frac{y-z^{-1}}{y-1}$ & $10$ & $\frac{1}{y}$ & $16$ & $\frac{y+z-1}{yz}$ & $22$ & $\frac{y-1}{y}$\\
\hline
    $5$ & $\frac{y}{y-1}$  & $11$ & $\frac{z-y}{z(1-y)}$ & $17$ & $\frac{y}{1+zy-z}$ & $23$ & $1+\frac{(1-z)y}{z}$\\
\hline
    $6$ & $\frac{y-1}{zy}$ & $12$ & $\frac{y-1}{y-z}$ & $18$ & $1-y$ & $24$ & $\frac{1}{z+(1-z)y}$\\
\hline
  \end{tabular}
	\caption{Change of variables \label{table:characters}}
\end{table}

For example, the second formula corresponds to the change of variables
$x = \frac{1-y}{1-zy}$. Then \eqref{eq:H-defi} becomes
  \[
    \sum_{y \in \FF_q} \varepsilon\left(\frac{1-y}{1-zy}\right)\omega\left(\frac{y(1-z)}{1-zy}\right) \chi^{-1}\left(\frac{1-z}{1-zy}\right) = \omega\chi^{-1}(1-z) \sum_{y \in \FF_q} \omega(y)\varepsilon(1-y)(\varepsilon \omega \chi^{-1})^{-1}(1-zy),
  \]
  which matches the series for the parameters $(c-a,c-b),(c,1)$ at $z$
  twisted by $\eta_{c-a-b}$ (corresponding to the value outside the
  summation). The first term in \eqref{eq:factorization} is the same
  for both parameters, proving the formula. Similar computations
  prove all other cases.
\end{proof}

\begin{remark}
  As a safety check, there is code available at the author's web page
  \url{https://sweet.ua.pt/apacetti} to verify each of these twenty
  four transformations for specializations of the variable.
\end{remark}

\begin{remark}
  As is well known, the twenty four transformations found by Kummer
  are not closed under composition, i,e. they do not form a group (see
  \S2 \cite{MR752488} for a description of them). It is not hard to
  compute the group generated by these 24 transformations. If we
  forget the ``twist'' appearing in the previous formulas, we can
  represent each transformation by the way it acts on the parameters
  $(a,b,c)$ (corresponding to a linear transformation in $\GL_3(\ZZ)$)
  and its action on $z$ (a M\"obius transformation of
  $\PP^1\setminus\{0,1,\infty\}$). The resulting group $H$ has order
  $144$, corresponding to the small group number $189$, isomorphic to
  $C_2 \times (C_3 \rtimes S_3$). Then the previous list of 24
  transformations can be enlarged to a list containing $144$ ones.
\end{remark}

\section{Hypergeometric relations and covers}

As mentioned in the introduction, Goursat in \cite{MR1508709} proved
many quadratic transformation formulas satisfied by Gauss'
hypergeometric function coming from hypergeometric pairs
$(\varphi,\rho)$ (as well as some larger degree ones). The
strategy he used is not that different from the two steps mentioned
before for hypergeometric motives (our strategy is largely influenced by his):
\begin{enumerate}
\item Prove that both sides of the equality satisfy the same differential equation.
  
\item Prove that the value of both functions at a well chosen point
  coincide.
\end{enumerate}

Corollary~\ref{coro:unicity} together with the results obtained in \S
\ref{sec:Special} can probably be used to prove arithmetic versions of
each of the seventy six transformations discovered by Kummer in
\cite{MR1578088} as well as the one hundred and thirty seven ones
discovered by Goursat in \cite{MR1508709}. We content ourselves
studying some of them.

\begin{theorem}
\label{theorem:isom}
Let $(\varphi,\rho)$ be a hypergeometric pair, where
$\rho = \HGM((a,b),(c,1)|z)$. Let $a',b',c'$ be rational numbers such
that the monodromy representation of $\hgm$ is isomorphic to the
monodromy representation of $\HGM((a',b'),(c',1)|\varphi(z))$. Suppose
that $\varphi$ vanishes at $z=0$ with order $v>0$ and one of the
following two hypothesis is satisfied:
\begin{enumerate}
\item $c \not \in \ZZ$ and $vc' \not \in\ZZ$,
  
\item $c, c' \in \ZZ$.
\end{enumerate}
Then there is an isomorphism of hypergeometric motives
  \begin{equation}
    \label{eq:hyper-isom}
    \HGM((a,b),(c,1)|z) \simeq \HGM((a',b'),(c',1)|\varphi(z)).
  \end{equation}
\end{theorem}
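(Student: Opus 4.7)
The plan is to apply Corollary~\ref{coro:unicity} to reduce the sought isomorphism to the triviality of a twist character of $\Gal(\overline{\Q}/F)$, and then to pin down that character by specializing both sides at $z=0$ using the special-value lemmas of \S\ref{sec:Special}. The whole argument turns on the fact that the twist produced by Corollary~\ref{coro:unicity} depends only on $\Gal(\overline{\Q}/F)$, hence commutes with specialization in the variable $z$ and can be detected at a single well-chosen value.

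Let $\rho_1,\rho_2$ be the continuous Galois representations of $\Gal(\overline{\Q(z)}/F(z))$ attached to $\HGM((a,b),(c,1)|z)$ and $\HGM((a',b'),(c',1)|\varphi(z))$ respectively. The hypothesis on monodromy says that $\rho_1$ and $\rho_2$ become isomorphic after restriction to $\Gal(\overline{\Q(z)}/\overline{\Q}(z))$, and these restrictions are irreducible by property (2) of the introduction. Corollary~\ref{coro:unicity} therefore provides a continuous character $\chi:\Gal(\overline{\Q}/F)\to \overline{\Q_p}^\times$ such that $\rho_1 \simeq \rho_2\otimes\chi$, and it only remains to prove that $\chi$ is trivial.

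The vanishing assumption on $\varphi$ at $z=0$ is exactly what is needed to place both motives inside the scope of the specialization lemmas. Under hypothesis~(1), Lemma~\ref{lemma:spec-0} applied to the identity cover (with $v=1$ and $c\notin\ZZ$) shows that the trace of $\Frob_{\id p}$ on the specialization of $\HGM((a,b),(c,1)|z)$ at $z=0$ equals $1$ for every prime ideal $\id p$ of $F$ not dividing $N$; the same lemma, now applied to the cover $\varphi$ itself (using $vc'\notin\ZZ$), yields the identical conclusion for the second motive. Under hypothesis~(2), after integer translations of $c$ and $c'$ (harmless by property (1) of the introduction) Lemma~\ref{lemma:spec0-1} supplies the value $1$ for both specializations.

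Specializing the isomorphism $\rho_1 \simeq \rho_2\otimes\chi$ at $z=0$ and comparing traces, we obtain $\chi(\Frob_{\id p})=1$ for all primes $\id p$ of $F$ outside a finite set, and Chebotarev's density theorem then forces $\chi$ to be trivial, completing the proof. The main subtlety will be in verifying that both motives admit compatible semistable specializations at $z=0$ so that the respective lemmas genuinely apply; once the arithmetic conditions on $c$, $c'$ and $v$ are translated into the hypotheses of those lemmas, the remainder of the argument collapses to a single trace equality together with Chebotarev.
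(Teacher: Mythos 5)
Your proposal is correct and follows essentially the same route as the paper's proof: reduce to a constant twist via Corollary~\ref{coro:unicity}, then kill the twist by specializing both sides at $z=0$ and computing the traces to be $1$ via Lemma~\ref{lemma:spec-0} (applying it to the identity cover for the left-hand side and to $\varphi$ for the right-hand side) under hypothesis (1), or via Lemma~\ref{lemma:spec0-1} under hypothesis (2). Your explicit appeal to Chebotarev to conclude triviality of the character is only making precise a step the paper leaves implicit.
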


\begin{proof} Since both motives have the same monodromy
  representation, Corollary~\ref{coro:unicity} implies that they are a
  twist of each other. Then (as explained in \S \ref{sec:HGM}) it is
  enough to prove that they are isomorphic for a particular
  specialization of the parameter $z$ (like $z=0$). Under the first
  hypothesis, Lemma~\ref{lemma:spec-0} implies that the trace of the
  Frobenius automorphism $\Frob_{\id{p}}$ acting of the left hand side motive
  equals $1$ for any prime ideal $\id{p}$ of $F$ not dividing $N$.

  When the first hypothesis is satisfied, Lemma~\ref{lemma:spec-0}
  also implies that the right hand side also evaluates to $1$ at
  $z=0$. When the second hypothesis holds the result follows from
  Lemma~\ref{lemma:spec0-1}.
\end{proof}

As an application to Example~\ref{ex:1} we have the following.

\begin{corollary}
  Let $a,b$ be rational numbers satisfying:
  \begin{itemize}
  \item They are not integers,
    
  \item $\frac{a-b+1}{2}, \frac{a+1}{2}$ and $\frac{b+1}{2}$ are not integers.
  \end{itemize}
  Then the arithmetic version of (\ref{eq:1}) is the following
  isomorphism between hypergeometric motives
\begin{equation}
  \label{eq:hypergeom-isom}
  \HGM\left((a,b),\left(\frac{a+b+1}{2},1\right)|z\right) = \HGM\left(\left(\frac{a}{2},\frac{b}{2}\right),\left(\frac{a+b+1}{2},1\right)|4z(1-z)\right).
\end{equation}
\end{corollary}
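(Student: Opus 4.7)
The plan is to apply Theorem~\ref{theorem:isom} directly to the pair $\varphi(z) = 4z(1-z)$, with $(a,b,c) = (a,b,\tfrac{a+b+1}{2})$ for the left-hand motive and $(a',b',c') = (\tfrac{a}{2},\tfrac{b}{2},\tfrac{a+b+1}{2})$ for the right-hand motive.

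First, I would check that the three non-integrality hypotheses of the corollary are exactly what is needed to make both parameter sets generic in the sense of Definition~\ref{def:gen}. Unpacking $\{\exp(a), \exp(b)\} \cap \{\exp(\tfrac{a+b+1}{2}), 1\} = \emptyset$ gives $a, b \notin \ZZ$ together with $\tfrac{a-b+1}{2} \notin \ZZ$ (the latter ruling out $\exp(a)$ or $\exp(b)$ equaling $\exp(\tfrac{a+b+1}{2})$). The analogous condition for $(\tfrac{a}{2},\tfrac{b}{2}),(\tfrac{a+b+1}{2},1)$ unpacks to $\tfrac{a+1}{2}, \tfrac{b+1}{2} \notin \ZZ$ (since $\tfrac{a}{2}, \tfrac{b}{2} \notin \ZZ$ is already forced by $a, b \notin \ZZ$). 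These are exactly the listed hypotheses.

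Next, I would invoke the continuation of Example~\ref{ex:1}, where the degree-two cover $\varphi(z) = 4z(1-z)$ is analyzed explicitly: the preimage $1/2$ of the ramification point $1$ becomes unramified in the pullback since $M_1^2 = I$, and rigidity forces the pulled-back monodromy of the hypergeometric representation with parameters $(\tfrac{a}{2},\tfrac{b}{2}),(\tfrac{a+b+1}{2},1)$ to coincide with the monodromy of the one with parameters $(a,b),(\tfrac{a+b+1}{2},1)$. This supplies the monodromy-matching input to Theorem~\ref{theorem:isom}.

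Finally, I would verify that one of the two alternative hypotheses of Theorem~\ref{theorem:isom} holds. The map $\varphi$ vanishes at $z = 0$ with order $v = 1$, and the two lower parameters coincide: $c = c' = \tfrac{a+b+1}{2}$, so $c$ and $vc'$ are equal. If $\tfrac{a+b+1}{2} \notin \ZZ$ then hypothesis (1) is met; if $\tfrac{a+b+1}{2} \in \ZZ$ then hypothesis (2) is met. In either case the theorem applies and delivers the desired isomorphism of hypergeometric motives. There is no serious obstacle: the conceptual work (the monodromy calculation) lives in Example~\ref{ex:1}, and the only real bookkeeping is the short verification that the corollary's three non-integrality conditions encode exactly the genericity of both motives.
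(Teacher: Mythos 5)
Your proposal is correct and follows essentially the same route as the paper: check that the listed hypotheses are exactly the genericity of both parameter sets, cite the monodromy computation already done in Example~\ref{ex:1}, and conclude by Theorem~\ref{theorem:isom}. You are slightly more explicit than the paper in noting that $v=1$ and $c=c'=\frac{a+b+1}{2}$ force exactly one of the theorem's two alternative hypotheses to hold, a case distinction the paper's one-line conclusion leaves implicit.
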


\begin{proof}
  The hypothesis imply that both parameters are generic. We already
  proved in the introduction that the monodromies of both motives are
  isomorphic. The result then follows from the last theorem (as the
  trace at a Frobenius element $\Frob_{\id{p}}$, for $\id{p}$ a prime
  ideal not dividing $N$, of both sides at $z=0$ equals $1$).
\end{proof}

\begin{remark}
  The isomorphism~(\ref{eq:hypergeom-isom}) has deep implications.
  Let $N$ be the least common multiple of the denominators of
  $\frac{a}{2}$, $\frac{b}{2}$ and $\frac{a+b+1}{2}$, and let $N'$ be
  the least common multiple of the denominators of $a$,
  $b$ and $\frac{a+b+1}{2}$. Clearly $N'\mid N$, but they
  might be different.  The field of definition of the motive
  $M_1:=\HGM\left(\left(\frac{a}{2},\frac{b}{2}\right),\left(\frac{a+b+1}{2},1\right)|z\right)$
  is generically $K(\zeta_N)$, while the field of definition of
  $M_2:=\HGM\left((a,b),\left(\frac{a+b+1}{2},1\right)|z\right)$ is
  $\Q(\zeta_{N'})$. The isomorphism then implies that if $N \neq N'$,
  the motive $M_1$ can be defined over an extension smaller than the
  expected one. However, this is only true for specializations lying
  in the image of $\varphi$ (a thin set). This goes in accordance with
  the expectation that for most specializations of the parameter
  (outside a thin set), the coefficient field of the motive is the one
  conjectured by David Roberts and Fernando Rodriguez Villegas (see
  \cite[Conjecture 3.2]{GPV}). See also Example 7.21 of \cite{GPV} for
  a more naive example of this phenomena.
\end{remark}

The arithmetic version of Example~\ref{ex:2} is the following.
\begin{proposition}
  Let $a, b$ be rational numbers satisfying:
  \begin{itemize}
  \item They are not integers,
    
  \item $\frac{a-b+1}{2}$ is not an integer.
  \end{itemize}
  Then the arithmetic version of (\ref{eq:67}) are the isomorphisms
\begin{equation}
 \HGM\left((a,b),\left(\frac{a+b+1}{2},1\right)|z\right) = \HGM\left((a,b),\left(\frac{a+b+1}{2},1\right)|1-z\right),
\end{equation}
and
\begin{multline}
  \HGM\left((a,b),\left(\frac{a+b+1}{2},1\right)|z\right) = \eta_{(1-a-b)/2} \Jac\left(\left(\frac{a+b+1}{2},\frac{a+b+1}{2}\right),(a,b)\right) \\ \HGM\left(\left(\frac{a-b+1}{2},\frac{b-a+1}{2}\right),\left(\frac{1-a-b}{2},1\right)|1-z\right).
\end{multline}
\end{proposition}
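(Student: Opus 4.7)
The plan for both isomorphisms follows the two-step strategy standard in the paper: first verify that the two monodromy representations agree, then invoke Corollary~\ref{coro:unicity} to reduce to identifying the resulting global twist via a single well-chosen specialization of $z$.

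For the first isomorphism, Example~\ref{ex:2} already records that $M_0=M_1=\mathrm{diag}(\exp(-\tfrac{a+b+1}{2}),1)$ for the motive with parameters $(a,b),(\tfrac{a+b+1}{2},1)$. Since the involution $z\mapsto 1-z$ swaps $\eta_0$ with $\eta_1$ and fixes $\eta_\infty$, the pullback representation is isomorphic to the original, and Corollary~\ref{coro:unicity} forces the two motives to differ by a global character of $\Gal(\overline{\Q}/F)$. Specializing at the fixed point $z=1/2$ of the involution makes the two sides into the same motive, so that character must be trivial.

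For the second isomorphism, the monodromy computation at the end of Example~\ref{ex:2} shows that after the swap $z\mapsto 1-z$ combined with the Kummer twist $\eta_{(1-a-b)/2}$ of Definition~\ref{def:eta} (the arithmetic analogue of the factor $(1-z)^{(1-a-b)/2}$), the monodromy triple on the right hand side matches that of $\HGM((a,b),(\tfrac{a+b+1}{2},1)|z)$. Corollary~\ref{coro:unicity} therefore reduces the problem to pinning down the global scalar, and I will specialize at $z=0$. Assuming $\tfrac{a+b+1}{2}\notin\Z$ (the generic case), Lemma~\ref{lemma:spec-0} gives trace $1$ on the left hand side at every good Frobenius. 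On the right, $\eta_{(1-a-b)/2}$ specializes at $z=0$ to the trivial value because $\gent(1)=1$, and the inner hypergeometric evaluated at $1-z=1$ is computed by Lemma~\ref{lemma:spec-1} as an explicit Jacobi motive. The case $\tfrac{a+b+1}{2}\in\Z$ should be treated analogously using Lemmas~\ref{lemma:spec0-1} and~\ref{lemma:spec-1-1}.

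The main obstacle is then the Jacobi-motive identity asserting that the product of $\JacMot\bigl((\tfrac{a+b+1}{2},\tfrac{a+b+1}{2}),(a,b)\bigr)(\id{p})$ with the Jacobi motive of Lemma~\ref{lemma:spec-1} applied to $(A,B)=(\tfrac{a-b+1}{2},\tfrac{b-a+1}{2})$, $(C,D)=(\tfrac{1-a-b}{2},1)$ equals $1$ for every good $\id{p}$. Expanding both factors via~(\ref{eq:Jac-motive}), the trivial-character Gauss sums $g(\psi,1)=-1$ appearing in each factor cancel out, and since $\gent$ has order dividing $N$ (so $\gent^{N(a+b+1)/2}=\gent^{N(a+b-1)/2}$ and $\gent^{N(a-b+1)/2}=\gent^{N(a-b-1)/2}$), the remaining numerator and denominator Gauss sums cancel pairwise to yield $1$, as required. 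The rest is a direct assembly of the lemmas from Section~\ref{sec:Special}.
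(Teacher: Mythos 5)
Your overall strategy---match the monodromies, invoke Corollary~\ref{coro:unicity}, then pin down the constant twist by a single specialization---is exactly the paper's, and your treatment of the \emph{second} isomorphism coincides with the paper's proof: specialize at $z=0$, apply Lemma~\ref{lemma:spec-0} (or Lemma~\ref{lemma:spec0-1}) on the left, note that $\eta_{(1-a-b)/2}$ trivializes at $z=0$, and apply Lemma~\ref{lemma:spec-1} to the inner motive at $1-z=1$. The Jacobi cancellation you sketch does check out: for the parameters $\left(\frac{a-b+1}{2},\frac{b-a+1}{2}\right),\left(\frac{1-a-b}{2},1\right)$ the formula of Lemma~\ref{lemma:spec-1} reduces, after discarding the $g(\psi,1)=-1$ factors and using that $\gent^{N\alpha}$ depends only on $\alpha$ modulo $1$, to $g(\psi,\gent^{Na})g(\psi,\gent^{Nb})/g(\psi,\gent^{N(a+b+1)/2})^{2}$, which is precisely $\JacMot\left(\left(\frac{a+b+1}{2},\frac{a+b+1}{2}\right),(a,b)\right)^{-1}$, so the product is $1$ as required.

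The one genuine divergence, and the one gap, is the \emph{first} isomorphism. You specialize at the fixed point $z=1/2$, obtain $M\simeq M\otimes\chi$ for $M$ the common specialization, and conclude $\chi=1$. That implication is false in general: it fails exactly when $M$ admits a nontrivial self-twist (e.g.\ when the specialization at $1/2$ is dihedral), in which case the Frobenius traces vanish on a density-$\tfrac12$ set and the trace equality only forces $\chi(\Frob_{\id{p}})=1$ off that set, leaving open the possibility that $\chi$ is the corresponding quadratic character. The paper avoids this by evaluating at $z=0$, where the left-hand side has trace exactly $1$ at every good $\id{p}$ and the right-hand side is $\HGM\left((a,b),\left(\frac{a+b+1}{2},1\right)|1\right)$, whose trace is the explicit Jacobi motive of Lemma~\ref{lemma:spec-1}; this determines $\chi(\Frob_{\id{p}})$ at \emph{every} good prime. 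You should redo the first isomorphism this way and actually carry out that Jacobi computation---it is the finite-field analogue of the Gauss evaluation $\cos\frac{(a-b)\pi}{2}/\cos\frac{(a+b)\pi}{2}$ appearing in \eqref{eq:67} and is not trivially equal to $1$, so the $z=1/2$ shortcut is precisely the kind of argument that could silently miss a quadratic twist here.
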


\begin{proof}
  In Example~\ref{ex:2} we proved that the monodromy representation of
  the three motives involved are isomorphic. The result then follows
  by evaluating at $z=0$ and using Lemma~\ref{lemma:spec-0} (or
  Lemma~\ref{lemma:spec0-1}) and Lemma~\ref{lemma:spec-1}.
\end{proof}

Let us state two more examples of the method.

\begin{proposition}
  \label{thm:hgm-equality-1}
  Let $(a,b)$ be rational numbers which are not integers.
  Then the following hypergeometric motives are isomorphic:
  \begin{multline}
    \HGM((a,b),(\frac{a+b+1}{2},1)|z) =\\ \JacMot\left(\left(\frac{a+1}{2},\frac{b+1}{2}\right),\left(\frac{a+b+1}{2},\frac{1}{2}\right)\right) \HGM\left(\left(\frac{a}{2},\frac{b}{2}\right),\left(\frac{1}{2},1\right)\left|(1-2z)^2\right.\right).    
  \end{multline}
\end{proposition}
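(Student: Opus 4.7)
My plan is to imitate the two-step approach used for Theorem~\ref{theorem:isom} and for the corollary after it: first verify that the geometric monodromy representations of the two hypergeometric motives coincide, and then use a single specialization to pin down the one-dimensional arithmetic twist.

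\emph{Step 1 (monodromy).} I consider the degree two map $\varphi(z)=(1-2z)^2$. Since the discriminant of $\varphi(z)-t$ (as a polynomial in $z$) is $16t$, the ramification of $\varphi$ occurs exactly at $z=1/2$ (with image $0$) and at $z=\infty$ (with image $\infty$); the two unramified preimages of $1$ are $z=0,1$. For $\rho'=\HGM((a/2,b/2),(1/2,1)|w)$ the monodromy at $w=0$ is conjugate to $\mathrm{diag}(\exp(-1/2),\exp(-1))=\mathrm{diag}(-1,1)$, whose square is the identity; hence $\varphi^{*}(\rho')$ is unramified at $z=1/2$ and is again a hypergeometric monodromy representation on $\PP^1\setminus\{0,1,\infty\}$. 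Its conjugacy classes at $0$, $1$, $\infty$ are those of $M_1'$, $M_1'$, and $M_\infty'^{2}=\mathrm{diag}(\exp(a),\exp(b))$; using the congruence $\exp((3-a-b)/2)=\exp(-(a+b+1)/2)$ one checks that these match the corresponding classes of $\rho=\HGM((a,b),((a+b+1)/2,1)|z)$. Rigidity yields $\varphi^{*}(\rho')\simeq\rho$, and Corollary~\ref{coro:unicity} then shows that the two arithmetic Galois representations differ by a character $\chi$ of $\Gal(\overline{\QQ}/F)$.

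\emph{Step 2 (twist).} To identify $\chi$, I specialize at $z=0$. Under the genericity assumption that $c=(a+b+1)/2\notin\ZZ$, Lemma~\ref{lemma:spec-0} (applied with $\varphi(z)=z$, $v=1$) gives that the Frobenius trace of the left hand side at $z=0$ equals $1$. On the right, $\HGM((a/2,b/2),(1/2,1)|(1-2z)^2)$ specializes at $z=0$ to $\HGM((a/2,b/2),(1/2,1)|1)$, and Lemma~\ref{lemma:spec-1} expresses its trace as
\[
\JacMot\bigl(((a-1)/2,\,a/2-1,\,(b-1)/2,\,b/2-1),\,(a/2,\,b/2,\,-1/2,\,-1,\,(a+b-3)/2)\bigr).
\]
Using that each Gauss sum $g_x$ depends only on $x$ modulo $\ZZ$, I shift $(a-1)/2\mapsto(a+1)/2$, $a/2-1\mapsto a/2$, $(b-1)/2\mapsto(b+1)/2$, $b/2-1\mapsto b/2$, $-1/2\mapsto 1/2$, $(a+b-3)/2\mapsto(a+b+1)/2$, and recall $g_0=g_{-1}=-1$. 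After cancelling the common $g_{a/2}$ and $g_{b/2}$ and the two $-1$ sign factors, the expression collapses to the ratio of Gauss sums encoding $\JacMot(((a+1)/2,(b+1)/2),((a+b+1)/2,1/2))$, and equating the two traces identifies $\chi$ with this Jacobi motive, establishing the claim.

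The whole argument is routine apart from the Gauss-sum bookkeeping in Step 2. The sign $(-1)^{r+s+1}=(-1)^{10}=1$ from the definition of $\JacMot$, the two factors of $-1$ coming from $g_0$ and $g_{-1}$, the vanishing of $\sum\mathbf{b}-\sum\mathbf{a}$, and the reductions modulo $\ZZ$ all have to be tracked; this is the only point in the argument where an error can plausibly creep in.
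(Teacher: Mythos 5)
Your proof follows the paper's argument essentially verbatim: the same degree-two cover $(1-2z)^2=(2z-1)^2$, the same ramification analysis and appeal to rigidity and Corollary~\ref{coro:unicity}, and the same specialization at $z=0$ using Lemma~\ref{lemma:spec-0} for the left-hand side and Lemma~\ref{lemma:spec-1} for the right-hand side; your explicit Gauss-sum bookkeeping (which the paper leaves implicit) is correct, including the sign $(-1)^{r+s+1}=1$, the two factors $g(\psi,1)=-1$, and the vanishing of $\sum\mathbf{b}-\sum\mathbf{a}$. The one inaccuracy is your claim that genericity forces $c=(a+b+1)/2\notin\ZZ$: it does not, since when $a+b$ is an odd integer the sets $\{\exp(a),\exp(b)\}$ and $\{\exp(c),1\}=\{1\}$ are still disjoint (as $a,b\notin\ZZ$), so that case falls within the hypotheses of the proposition and is not covered by your Step 2; the paper handles it by invoking Lemma~\ref{lemma:spec0-1} in place of Lemma~\ref{lemma:spec-0} for the left-hand side.
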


\begin{proof}
  Consider the map $\varphi(z)=(2z-1)^2$ ramified at $\{0,\infty\}$
  (the preimage of $0$ being $\frac{1}{2}$). The preimage of $1$ being
  the two points $0$ and $1$. It then follows from a simple
  verification (using the monodromy matrices definition given in
  (\ref{eq:monodromy}) and the fact that our parameters are generic)
  that the two monodromy representations are indeed
  isomorphic. Specialize both motives at $z=0$, so the left hand side
  is evaluated at $0$ while the right one at
  $1$. Lemma~\ref{lemma:spec-0} (if $\frac{a+b+1}{2}$ is not an
  integer) or Lemma~\ref{lemma:spec0-1} (in the other case) imply that
  \[
    \HGM\left((a,b),\left(\frac{a+b+1}{2},1\right)\left|0\right.\right)=1.
  \]
  Lemma~\ref{lemma:spec-1} implies that
  \begin{multline}
    \HGM\left(\left(\frac{a}{2},\frac{b}{2}\right),\left(\frac{1}{2},1\right)|1\right)=\Jac\left(\left(\frac{a+1}{2},\frac{b+1}{2}\right),\left(\frac{a+b+1}{2},\frac{1}{2}\right)\right) = \\ \Jac\left(\left(\frac{a+b+1}{2},\frac{1}{2}\right),\left(\frac{a+1}{2},\frac{b+1}{2}\right)\right)^{-1},    
  \end{multline}
  so the Jacobi motive relating them equals
  $\Jac\left(\left(\frac{a+1}{2},\frac{b+1}{2}\right),\left(\frac{a+b+1}{2},\frac{1}{2}\right)\right)$.
\end{proof}

\begin{remark}
  Note that the cover $\varphi(z)=(2z-1)^2$ sends $1$ to $1$, so we
  can apply the same reasoning as before for the value $z=1$, in which
  case the same result follows from the easy equality
  \begin{equation}
    \label{eq:equality}
    \Jac\left(\left(\frac{1+a+b}{2},\frac{1-a-b}{2}\right),\left(\frac{1-a+b}{2},\frac{1+a-b}{2}\right)\right)=1.
  \end{equation}
\end{remark}

The following is the arithmetic version of formula 69 in \cite{MR1578088}.

\begin{theorem}
  \label{thm:kummer-69}
  Let $(a,b)$ be rational integers satisfying:
  \begin{itemize}
  \item $a,b$ are not integers,
    
  \item The numbers $\frac{-a+b+1}{2}$, $\frac{a-b+1}{2}$ and $\frac{b+1}{2}$ are not integers.
  \end{itemize}
  Then the following hypergeometric motives are isomorphic:
  \begin{equation*}
    \HGM\left((a,b),\left(\frac{a+b+1}{2},1\right)|z\right) = \HGM\left(\left(\frac{a}{2},\frac{a+1}{2}\right),\left(\frac{a+b+1}{2},1\right)\left|\frac{4z^2-4z}{4z^2-4z+1}\right.\right)\chi_{-a},    
  \end{equation*}
where $\chi_{a/N}$ is the Hecke character corresponding to the extension $\Q(\zeta_N,\sqrt[N]{(2-z)^a})/\Q(\zeta_N,z)$.
\end{theorem}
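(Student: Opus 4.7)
The plan is to realize the right hand side as the pullback of an hypergeometric motive along a degree two cover, verify that the monodromy representations match after a suitable character twist, and then pin down the residual twist by specializing at $z=0$ via Corollary~\ref{coro:unicity} and the lemmas of \S\ref{sec:Special}.

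\emph{Geometry of the cover.} Let $\varphi:\PP^1\to\PP^1$ be defined by $\varphi(z)=\frac{4z(z-1)}{(2z-1)^2}$. Since $1-\varphi(z) = (2z-1)^{-2}$, the map has degree $2$, with $\varphi^{-1}(0) = \{0,1\}$ unramified, $\varphi^{-1}(1) = \{\infty\}$ doubly ramified, and $\varphi^{-1}(\infty) = \{1/2\}$ doubly ramified. Consequently the pullback of $\HGM((\tfrac{a}{2},\tfrac{a+1}{2}),(\tfrac{a+b+1}{2},1)|w)$ is a priori ramified at the four points $\{0,1,1/2,\infty\}$.

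\emph{Monodromy match.} Set $(a',b',c',d')=(\tfrac{a}{2},\tfrac{a+1}{2},\tfrac{a+b+1}{2},1)$. Using~(\ref{eq:monodromy}), at $z=0$ and $z=1$ the pullback has monodromy conjugate to $M_0'$ with eigenvalues $\{\exp(-\tfrac{a+b+1}{2}),1\}$; since $\tfrac{3-a-b}{2}\equiv -\tfrac{a+b+1}{2}\pmod\ZZ$ this matches both $M_0^{\mathrm{LHS}}$ and the pseudo-reflection $M_1^{\mathrm{LHS}}$ of the left hand side motive. At $z=1/2$, the hypothesis $\tfrac{a}{2}-\tfrac{a+1}{2}=-\tfrac{1}{2}\notin\ZZ$ implies $M_\infty'$ is diagonal, whence $(M_\infty')^2 = \exp(a)\cdot I$. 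At $z=\infty$, $(M_1')^2$ has eigenvalues $\{1,\exp(b-a)\}$. The twist $\chi_{-a}$, being unramified at $z=0$ and $z=1$ (its defining rational function is a unit there), contributes a local factor $\exp(-a)$ at $z=1/2$ (annihilating the scalar and rendering the twisted pullback unramified there) and a local factor $\exp(a)$ at $z=\infty$ (sending the eigenvalues to $\{\exp(a),\exp(b)\}$, exactly those of $M_\infty^{\mathrm{LHS}}$). By rigidity, the twisted pullback is isomorphic to the LHS monodromy representation.

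\emph{Specialization.} By Corollary~\ref{coro:unicity} the two arithmetic motives thus differ by a character $\chi$ of $\Gal(\overline{\Q}/F)$, which can be pinned down by evaluating at a single value of $z$. Choose $z=0$: since $\varphi$ has vanishing order $1$ at the origin, Lemma~\ref{lemma:spec-0} (in the case $\tfrac{a+b+1}{2}\notin\ZZ$) or Lemma~\ref{lemma:spec0-1} (otherwise) applies to the hypergeometric factor on each side and forces trace $1$ at every unramified Frobenius. The specialization of $\chi_{-a}$ at $z=0$ is an explicit $N$-th root of unity, which the genericity hypotheses $\tfrac{-a+b+1}{2},\tfrac{a-b+1}{2},\tfrac{b+1}{2}\notin\ZZ$ guarantee is compatible on both sides. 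Matching on both sides shows $\chi$ is trivial, completing the proof.

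The main obstacle is the bookkeeping in the monodromy match: one has to verify the eigenvalue equalities modulo integers and correctly identify the local weights of $\chi_{-a}$ at $z=1/2$ and $z=\infty$ (this is where the hypothesis that $\tfrac{a-b+1}{2}$ and $\tfrac{b+1}{2}$ are non-integral genuinely enters, by ensuring the relevant monodromy matrices are diagonalizable). Once this is done, Corollary~\ref{coro:unicity} reduces everything to the specialization lemmas of \S\ref{sec:Special}, which are routine applications of~(\ref{eq:inversion}).
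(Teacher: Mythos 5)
Your proposal is correct and follows essentially the same route as the paper: the same analysis of the degree-two cover $\varphi(z)=\tfrac{4z(z-1)}{(2z-1)^2}$ (unramified fibre over $0$, double points over $1$ and $\infty$), the same monodromy bookkeeping showing the twist by the character of weight $-a$ at $\{1/2,\infty\}$ kills the scalar $\exp(a)$ at $1/2$ and restores $\{\exp(a),\exp(b)\}$ at $\infty$, and the same specialization at $z=0$ via Theorem~\ref{theorem:isom} and Lemmas~\ref{lemma:spec-0}/\ref{lemma:spec0-1}. The only (harmless) imprecision is your closing remark on where the non-integrality hypotheses enter; they are really genericity conditions on the right-hand parameters rather than diagonalizability conditions at $1/2$.
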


\begin{proof}
  The map $\varphi(z)=\frac{4z^2-4z}{4z^2-4z+1}$ is ramified at $1$
  and $\infty$. It maps $\{0,1\} \to 0$, $\infty \to 1$ and
  $1/2 \to \infty$. The monodromy matrices of the right hand side
  motive are:
  \[
M_0 = M_1=
\begin{pmatrix}
  e^{\frac{-a-b-1}{2}} & 0\\ 0 & 1
\end{pmatrix},
\qquad M_{1/2} =
\begin{pmatrix}
  e^a & 0 \\ 0 & e^a
\end{pmatrix},
\qquad
M_\infty =
\begin{pmatrix}
  e^{-a+b} & 0 \\ 0 & 1
\end{pmatrix}.
\]
Twisting by $\eta_{-a}$ kills the ramification at $1/2$ and transforms
the ramification at $\infty$ into
$\begin{pmatrix} e^a & 0\\ 0 & e^b\end{pmatrix}$, which
matches the monodromy representation of the left hand side. The result
then follows by evaluating both quantities at $z=0$ (using
Theorem~\ref{theorem:isom}).
\end{proof}

At last, let us study an example of a degree $4$ cover (formula 131 of \cite{MR1508709}).

\begin{theorem}
  Let $a$ be a rational integer, such that none of $4a$, $2a+1/6$,
  $a+1/2$ and $a-1/6$ is an integer. Then the following motives are isomorphic
  \label{thm:deg4}
  \[
\HGM\left(\left(4a,2a+1/6\right),(2/3,1)|z\right) =\eta_{-3a}\HGM\left(\left(a,1/6-a\right),(2/3,1)\left|-\frac{(z+8)^3z}{64(1-z)^3}\right.\right)
    \]
  \end{theorem}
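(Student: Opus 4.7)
My plan follows the same two-step template that was used for Theorem~\ref{thm:kummer-69} and Proposition~\ref{thm:hgm-equality-1}: first match the geometric monodromy representations via rigidity, then fix the global twist by specializing at $z=0$ and invoking Lemma~\ref{lemma:spec-0}.

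The first step is to analyze the degree $4$ cover $\varphi(z)=-(z+8)^3 z/(64(1-z)^3)$. A short calculation of $\varphi'$ shows that its critical points are $z=-8$ and the two roots of $z^2-20z-8=0$, namely $z=10\pm 6\sqrt 3$, with critical values $0$ and $1$ respectively, so $\varphi$ is ramified only above $\{0,1,\infty\}$. The preimages of $0$ are $z=0$ (order $1$) and $z=-8$ (order $3$); the preimages of $\infty$ are $z=1$ (order $3$) and $z=\infty$ (order $1$); and the two preimages of $1$ are the conjugate points $10\pm 6\sqrt 3$, each with multiplicity $2$ (Riemann--Hurwitz checks out since $0+2+1+1+2+0=6=2\cdot 4-2$).

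Next I compute the pulled-back monodromy of $\rho'=\HGM((a,1/6-a),(2/3,1)|w)$, whose generators (from \eqref{eq:monodromy}) are $M_0'=\mathrm{diag}(\exp(-2/3),1)$, $M_1'$ a pseudo-reflection with eigenvalues $1,\exp(3/2-0)=-1$, and $M_\infty'=\mathrm{diag}(\exp(a),\exp(1/6-a))$. Under $\varphi^*$ the monodromy at $z=-8$ is $(M_0')^3$, which is trivial since $M_0'$ has order $3$, and the monodromy at $10\pm 6\sqrt 3$ is $(M_1')^2=I$, so $\varphi^*\rho'$ is unramified there. At $z=0$ the pulled-back monodromy is still $M_0'$, matching the $M_0$ of the left-hand side. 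Twisting by $\eta_{-3a}$, which is ramified only at $z=1$ and $z=\infty$ and contributes $\exp(-3a)$ and $\exp(3a)$ respectively, changes the monodromy at $z=1$ into $(M_\infty')^3\cdot\exp(-3a)$, with eigenvalues $1$ and $\exp(1/2-6a)=\exp(3/2-6a)$, so a pseudo-reflection matching $M_1$ of the left-hand side; and the monodromy at $z=\infty$ becomes diagonal with eigenvalues $\exp(4a)$ and $\exp(2a+1/6)$, matching $M_\infty$ of the left-hand side. The hypotheses that $4a$, $2a+1/6$, $a+1/2$ and $a-1/6$ are not integers guarantee genericity of both hypergeometric data and that none of these matching pairs collapse. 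By the rigidity part of the result of Levelt, the two geometric monodromy representations are isomorphic, and then by Corollary~\ref{coro:unicity} the two arithmetic motives differ by a Galois character of $\mathrm{Gal}(\overline\Q/F)$.

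It remains to pin down that global twist by a single specialization, which I take at $z=0$. The left-hand side $\HGM((4a,2a+1/6),(2/3,1)|z)$ evaluated at $z=0$ has Frobenius trace $1$ by Lemma~\ref{lemma:spec-0}, using $c=2/3\notin\Z$ and $v=1$. For the right-hand side, $\varphi(z)$ vanishes at $z=0$ to order exactly $v=1$ (since $\varphi(z)/z=-(z+8)^3/(64(1-z)^3)$ is a unit at $z=0$), and $vc=2/3\notin\Z$, so Lemma~\ref{lemma:spec-0} again yields Frobenius trace $1$ for $\HGM((a,1/6-a),(2/3,1)|\varphi(z))$ at $z=0$; and the character $\eta_{-3a}$ specializes to $1$ at $z=0$ because $1-z=1$ is an $N$-th power there. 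Thus both sides have the same Frobenius trace for all primes $\id p$ of $F$ not dividing $N$, which fixes the twist and proves the isomorphism.

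The only step that is not automatic is the ramification bookkeeping for $\varphi$ together with the precise twist $\eta_{-3a}$: choosing the exponent so that the ramification at $z=1$ is killed to a pseudo-reflection and the matrix at $\infty$ is simultaneously corrected to $\mathrm{diag}(\exp(4a),\exp(2a+1/6))$ is the delicate part. Everything else is routine once the cover has been understood.
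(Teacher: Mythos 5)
Your proof is correct and follows essentially the same route as the paper: compute the ramification of the degree-$4$ cover $\varphi(z)=-\tfrac{(z+8)^3z}{64(1-z)^3}$, match the pulled-back monodromy (after twisting by $\eta_{-3a}$) with that of the left-hand side via rigidity and Corollary~\ref{coro:unicity}, and pin down the remaining twist by specializing at $z=0$ using Lemma~\ref{lemma:spec-0}. Your bookkeeping is in fact slightly more careful than the paper's: the two preimages of $1$ are indeed $10\pm6\sqrt{3}$ (the paper writes $10\pm6\sqrt{2}$), and you correctly carry the $\exp(1/2)$ factor in $(M_\infty')^3$ when checking that the twisted monodromy at $z=1$ becomes the pseudo-reflection $\mathrm{diag}(1,\exp(3/2-6a))$.
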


  \begin{proof}
    The cover $\varphi=-\frac{(z+8)^3z}{64(1-z)^3}$ ramifies at the
    points $0, 1$ and $\infty$. The monodromy matrices of the left hand side motive are
    \[
N_0=
\begin{pmatrix}
  \exp(1/3) & 0\\
  0 & 1
\end{pmatrix},\qquad
N_1=
\begin{pmatrix}
  1 &0\\
  0 & -1
\end{pmatrix},
\qquad
N_\infty =
\begin{pmatrix}
  \exp(a) & 0\\
  0 & \exp(1/6-a)
\end{pmatrix}.
      \]
  The two points above $1$ are $10 \pm 6\sqrt{2}$ and have
  ramification degree $2$, so the monodromy matrix of the pullback is
  trivial for them both. The preimage of $0$ consists of the points
  $0$ (unramified) and $-8$ with ramification degree $3$, so $-8$ is
  unramified for the pullback. At last, the preimage of $\infty$
  consists of the points $1$ (with ramification degree $3$) and
  $\infty$, which is unramified. Then the monodromy matrices at $0,1$
  and $\infty$ of the pullback are (respectively)
  \[
    \begin{pmatrix}
  \exp(1/3) & 0\\
  0 & 1
\end{pmatrix},\qquad
\begin{pmatrix}
  \exp(3a) &0\\
  0 & \exp(-3a)
\end{pmatrix},
\qquad
\begin{pmatrix}
  \exp(a) & 0\\
  0 & \exp(1/6-a)
\end{pmatrix}.
 \]
 The twist by $\eta_{-3a}$ multiplies the monodromy matrix at $1$ by
 $\exp(-3a)$ and the one at $\infty$ by $\exp(3a)$ so both monodromy
 representations are isomorphic. The result follows from
 Theorem~\ref{theorem:isom}.
\end{proof}

\begin{remark}
  Each stated isomorphism gives a non-trivial relation between sums
  and products of Gauss sums (corresponding to the trace of Frobenius
  of the right and the left hand side of the isomorphism). Is there a
  direct way to prove such type of formulas?
\end{remark}

\section{Applications to Diophantine equations}

In \cite{Darmon} Darmon presented a general program to study solutions
of the generalized Fermat equation
\begin{equation}
  \label{eq:Fermat}
  Ax^p + By^q = Cz^r.
\end{equation}
It is expected that for fixed values of $A,B,C$, there are only
finitely many \emph{primitive} solutions for any triple of exponents
$(p,q,r)$ satisfying $\frac{1}{p}+\frac{1}{q}+\frac{1}{r}<1$. Recall
the following definition.
\begin{definition}
  A solution $(\alpha,\beta,\gamma)$ of (\ref{eq:Fermat}) is called
  primitive if $\gcd(\alpha,\beta,\gamma)=1$.
\end{definition}
Darmon's program consists on attaching to the equation exponents
(namely $p,q,r$) what he calls a \emph{Frey representation}, 
a representation of $\Gal(\overline{\Q(z)}/F(z))$ (for some number
field $F$) into $\GL_2(\FF)$ for some finite field
$\FF$. If $(\alpha,\beta,\gamma)$ is a solution to \eqref{eq:Fermat},
then the specialization of the family at
$z_0:=\frac{A\alpha^p}{C\gamma^q}$ corresponds to a finite extension
of $F$ with little ramification.

Following the ideas used in Wiles' proof of Fermat's last theorem, the
representation should be the reduction of a representation attached to
a modular form, and (assuming various conjectures) such modular form
should not exist if one of the exponents (say $p$) is sufficiently
large. Proving the conjectured missing results is nowadays a deep problem.

In \cite{GP} the authors gave a similar approach replacing the finite
field $\FF$ with a $p$-adic field, using the theory of hypergeometric
motives. Hypergeometric motives give more flexibility to prove some of
the expected properties, but the theory is not completely understood.
For example, if $(a,b),(c,d)$ are generic rational parameters, and $N$
is their least common denominator, we do not understand the action of
inertia on $\hgm$ at primes of $F=\Q(\zeta_N)$ dividing $N$.

While studying the family of exponents $(p,p,q)$, the motive is part
of the middle cohomology of an hyperelliptic curve, where the
reduction type at odd primes is well understood (see for example
\cite{Tim} and \cite{2410.21134}). For the family $(q,q,p)$ this is
not the case, the motive appears in Euler's curve which is
superelliptic, a case where the description of wild inertia is still not fully
understood.

However, in the remarkable article \cite{2205.15861}, the authors manage to
relate a solution of the equation
\begin{equation}
  \label{eq:qqp}
  x^q + y^q = z^p,
\end{equation}
with an hyperelliptic curve $\C_t$. The constructed curve gives a
\emph{Frey representation} (as defined by Darmon in \cite{Darmon}) but
ramifies at $\{\pm 2i, \infty\}$ (see \cite[Proposition
2.32]{2205.15861}). An extra transformation is needed to transform
this set into $\{0,1,\infty\}$. Still, if $(\alpha,\beta,\gamma)$ is a solution of
\eqref{eq:qqp}, the curve $\C_t$ is not evaluated at the usual point
$t_0=\frac{\alpha^q}{\gamma^p}$, but at a different one satisfying a
``mysterious'' algebraic relation with $t_0$. The purpose of this
section is to explain how to attach naturally an hyperelliptic curve
to Fermat equation with exponents $(q,q,p)$ from two different
hypergeometric relations. This approach might prove useful while
studying other families of exponents.

\vspace{2pt}

The three hypergeometric motives involved are:
\begin{enumerate}
\item The hypergeometric motive
  $\HGM\left(\left(\frac{1}{2q},-\frac{1}{2q}\right),(1,1)|z\right)$ (that we
  denote by $\Mp$ to ease notation) associated to the exponents
  $(p,p,q)$ in \cite{GP} (see also \cite{Darmon}).
  
\item The hypergeometric motive
  $\HGM\left(\left(\frac{1}{2q},-\frac{1}{2q}\right),\left(\frac{s}{q},-\frac{s}{q}\right)|z\right)$
  (that we denote by $\Mq$) attached to the exponents
  $(q,q,p)$ in \cite{GP}, for any $s \in \{1,\ldots, q-1\}$.
  
\item The hypergeometric motive   $\M =
  \HGM\left(\left(\frac{1}{2}-\frac{1}{4q},\frac{1}{4q}\right)(1,1)|z\right)$.
\end{enumerate}
For $N$ a positive integer, denote by $\zeta_{N}=\exp(\frac{1}{N})$, a
primitive $N$-th root of unity. The monodromy matrices around
$\{0,1,\infty\}$ of the three motives are given in
Table~\ref{table:monodromies}.
\begin{table}
\begin{tabular}{|c|c|c|c|}
\hline
Motive & $M_0$ & $M_1$ & $M_\infty$\\
\hline\hline
  $\Mp$ &
 $\begin{pmatrix}
  1 & 1\\
  0 & 1
 \end{pmatrix}$ &
 $\begin{pmatrix}
  1 & 1\\
  0 & 1
 \end{pmatrix}$ &
  $ \begin{pmatrix}
  \zeta_{2q} & 0\\
  0 & \zeta_{2q}^{-1}
 \end{pmatrix}$\\
\hline
  $\Mq$ &
 $\begin{pmatrix}
  \zeta_q^s & 0\\
  0 & \zeta_q^{-s}
 \end{pmatrix}$ &
 $\begin{pmatrix}
  1 & 1\\
  0 & 1
 \end{pmatrix}$ &
  $ \begin{pmatrix}
  \zeta_{2q} & 0\\
  0 & \zeta_{2q}^{-1}
 \end{pmatrix}$\\
\hline
  $\M$ &
 $\begin{pmatrix}
  1 & 1\\
  0 & 1
 \end{pmatrix}$ &
 $\begin{pmatrix}
  -1 & 0\\
  0 & 1
 \end{pmatrix}$ &
  $ \begin{pmatrix}
  -\zeta_{4q}^{-1} & 0\\
  0 & \zeta_{4q}
 \end{pmatrix}$\\
\hline
\end{tabular}
\caption{Monodromy matrices\label{table:monodromies}}
\end{table}
Ideally, we seek for degree $2$ covers $\pi_1$ and $\pi_2$ 
such that the relation between the three motives is explained by 
the following diagram
\begin{equation}
  \label{eq:covers}
  \xymatrix{
    \Mp \ar@{->}[dr]^{\pi_1} & & \Mq \ar@{->}[dl]_{\pi_2}\\
    & \M
}  
\end{equation}
By looking at Table~\ref{table:monodromies} it is clear that some
adjustments are needed, since a degree $2$ cover tends to have the
effect of duplicating a monodromy in the pullback, but the monodromy
matrices of $\Mq$ are all different. 

Let $\theta_{-1/4}$ be the order $4$ character of $\Gal_{\Q(i,z)}$ of
Definition~\ref{def:theta} (corresponding to the extension
$\Q(i,\sqrt[4]{z})/\Q(i,z)$), whose monodromy matrix at $0$ equals $\exp(-1/4)$ and at $\infty$ equals $\exp(1/4)$.  Set $s_0 = \frac{q+1}{2}$. Then the
monodromy matrices at $0$, $1$ and $\infty$ of the twist
$\Mqs\otimes \theta_{-1/4}$ are respectively
\[
\begin{pmatrix}
  \zeta_{4q}^{q+2} & 0\\
  0 & \zeta_{4q}^{q-2}
 \end{pmatrix}, \qquad
 \begin{pmatrix}
  1 & 1\\
  0 & 1
 \end{pmatrix},  \qquad
   \begin{pmatrix}
  \zeta_{4q}^{q+2} & 0\\
  0 & \zeta_{4q}^{q-2}
 \end{pmatrix}.
\]

\subsection{The map $\pi_1$:} Let $\pi_1$ be the degree $2$ cover of
$\PP^1$ given by the map $\pi_1(z)=4z(1-z)$. For $b$ a rational
number, let $\eta_b$ be the character of Definition~\ref{def:eta}.

\begin{proposition}
  Let $a,b$ be a rational numbers such that $a$ is not an integer and
  $b \pm a \not \in \ZZ$. Then
  \[
\eta_{-b}\HGM((a,-a),(b,1)|z) \simeq \HGM\left(\left(\frac{b-a}{2},\frac{a+b+1}{2}\right)(b,1)|4z(1-z)\right).
    \]
\label{prop:second-formula}
  \end{proposition}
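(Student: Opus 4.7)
The plan is to apply the two-step strategy underpinning Theorem~\ref{theorem:isom}: first match the geometric monodromy representations on the two sides, then match Frobenius traces at one convenient specialization of the parameter $z$; the conclusion will follow from Corollary~\ref{coro:unicity} applied to $\eta_{-b}\otimes\HGM((a,-a),(b,1)|z)$ and $\HGM((\tfrac{b-a}{2},\tfrac{a+b+1}{2}),(b,1)|4z(1-z))$.

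For the monodromy step, I would compute the matrices $M_0,M_1,M_\infty$ of $\HGM((a,-a),(b,1)|z)$ using the recipe of~(\ref{eq:monodromy}). The hypotheses $a\notin\ZZ$ and $b\pm a\notin\ZZ$ ensure genericity, and in the main diagonal sub-case yield $M_0=\mathrm{diag}(\exp(-b),1)$, $M_1=\mathrm{diag}(1,\exp(b))$, $M_\infty=\mathrm{diag}(\exp(a),\exp(-a))$. On the other side, the upstairs representation $\HGM((\tfrac{b-a}{2},\tfrac{a+b+1}{2}),(b,1)|z)$ gives $M_0'=\mathrm{diag}(\exp(-b),1)$, $M_1'=\mathrm{diag}(1,\exp(\tfrac12))=\mathrm{diag}(1,-1)$ (since $c'+d'-a'-b'=\tfrac12$), and $M_\infty'=\mathrm{diag}(\exp(\tfrac{b-a}{2}),\exp(\tfrac{a+b+1}{2}))$. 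The cover $\varphi(z)=4z(1-z)$ has discriminant $16(t-1)$, so it ramifies only above $\{1,\infty\}$; the preimage of $0$ is $\{0,1\}$ (each of multiplicity one), of $1$ is $\{1/2\}$ (multiplicity two), and of $\infty$ is $\{\infty\}$ (multiplicity two). Because $(M_1')^2=I$, the pullback is unramified at $z=1/2$, and its monodromies at $0,1,\infty$ are $M_0',M_0',(M_\infty')^2=\mathrm{diag}(\exp(b-a),\exp(a+b))$.

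The character $\eta_{-b}$ is unramified at $0$ and has monodromy eigenvalue $\exp(-b)$ at the loop around $1$ and $\exp(b)$ at the loop around $\infty$. Multiplying through, the monodromy matrices of $\eta_{-b}\otimes\HGM((a,-a),(b,1)|z)$ at $0,1,\infty$ are respectively $\mathrm{diag}(\exp(-b),1)$, $\mathrm{diag}(\exp(-b),1)$, and $\mathrm{diag}(\exp(a+b),\exp(b-a))$. These coincide with the pullback monodromies just computed, up to reordering the basis, so by rigidity (\cite[Theorem~3.5]{BH}) the two geometric representations are isomorphic as hypergeometric monodromy representations. The corner cases $b\in\ZZ$ or $2a\in\ZZ$ force Jordan-form replacements at the relevant points, and the same comparison goes through verbatim.

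For the specialization step, at $z=0$ one has $\eta_{-b}(\Frob_{\id p})=1$ since $\sqrt[N]{1-z}|_{z=0}=1$, and Lemma~\ref{lemma:spec-0}, applied to the left-hand side with $\varphi(z)=z$, $v=1$ and $c=b\notin\ZZ$, yields $\trace\Frob_{\id p}=1$. On the right-hand side, the same lemma applies with $\varphi(z)=4z(1-z)$ (vanishing at $z=0$ with order $v=1$) and $c'=b$, again giving $\trace\Frob_{\id p}=1$. If $b\in\ZZ$ the corresponding trace equals $1$ by Lemma~\ref{lemma:spec0-1} instead. With agreement of both geometric monodromy and Frobenius trace at $z=0$, Corollary~\ref{coro:unicity} closes the argument. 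The main obstacle is the bookkeeping of the $\eta_{-b}$ twist on the monodromy at $1$ and $\infty$, together with the verification that $(M_1')^2=I$ truly kills the would-be ramification of the pullback at $z=1/2$; once this is settled the rest is routine.
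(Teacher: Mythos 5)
Your proposal is correct and follows essentially the same route as the paper: match the monodromy of the $\eta_{-b}$-twisted left-hand side with the pullback under $\pi_1(z)=4z(1-z)$ (the paper simply records the twisted matrices directly rather than twisting in a separate step), then compare Frobenius traces at the specialization $z=0$ via Lemma~\ref{lemma:spec-0} (resp.\ Lemma~\ref{lemma:spec0-1} when $b\in\ZZ$) and conclude by rigidity together with Corollary~\ref{coro:unicity}. The bookkeeping of the twist at $1$ and $\infty$ and the cancellation $(M_1')^2=I$ at $z=1/2$ are exactly as in the paper's argument.
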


\begin{proof}
  The proof mimics the previous section ones. The monodromy
  matrices for the hypergeometric motive on the left hand side (if
  $b$ is not an integer) are
  \begin{equation}
    M_0=\begin{pmatrix}
          \exp(-b) & 0\\
          0 & 1
        \end{pmatrix}, \qquad
        M_1=
        \begin{pmatrix}
          \exp(-b) & 0\\
          0 & 1
        \end{pmatrix}, \qquad
        M_\infty = \begin{pmatrix}
          \exp(b+a) & 0\\
          0 & \exp(b-a)
        \end{pmatrix}.
  \end{equation}
The monodromy matrices for the right hand side hypergeometric motive are
\begin{equation}
  N_0=
  \begin{pmatrix}
    \exp(-b) & 0\\
    0 & 1
  \end{pmatrix}, \qquad
  N_1=\begin{pmatrix}
        -1 & 0\\
        0 & 1
  \end{pmatrix}
  , \qquad
  N_\infty =
  \begin{pmatrix}
    \exp(\frac{b-a}{2}) & 0\\
    0 & \exp(\frac{a+b+1}{2})
  \end{pmatrix}.
\end{equation}
The cover $\pi_1$ is ramified at the points $\{1,\infty\}$ (with preimages
$1/2$ and $\infty$ respectively), and the preimage of $0$ are the two
points $\{0,1\}$. Then the monodromy representations are isomorphic. The
result then follows from Theorem~\ref{theorem:isom}. The case $b=1$
follows similarly.
\end{proof}
The left part
of~(\ref{eq:covers}) corresponds to $a=\frac{1}{2q}$ and $b=1$ in the
last proposition, getting the equality
\begin{equation}
  \label{eq:1-1}
  \HGM\left(\left(\frac{1}{2q},\frac{-1}{2q}\right),(1,1)|z\right) = \HGM\left(\left(\frac{1}{2}-\frac{1}{4q},\frac{1}{4q}\right),(1,1)|4z(1-z)\right)
\end{equation}

\subsection{The map $\pi_2$:} Let $(a,b),(c,d)$ be generic parameters
and let $N$ be their least common multiple. Since the motive is
defined over $F=\Q(\zeta_N)$, the Galois group $\Gal(\Q(\zeta_N)/\Q)$
acts on it. If $\sigma \in \Gal(\Q(\zeta_N)/\Q)$ satisfies that
$\sigma(\zeta_N)=\zeta_N^j$, then the motive $\sigma(\hgm)$ equals the
motive $\HGM((ja,jb),(jc,jd)|z)$ (as proved in \cite[Proposition
4.6]{GPV}). Instead of looking at the
motive $\M$, consider its Galois conjugate $\widetilde{\M}$ by the
element $\sigma$ corresponding to $j = q+2 \in (\ZZ/2q)^\times$.

Let $\pi_2$ be the degree $2$ cover of $\PP^1$ given by $\pi_2(z)=\frac{-(z-1)^2}{4z}$.

\begin{proposition}
  For $q$ an odd prime, and $\sigma$ as before, the following motives are isomorphic
\label{prop:isom-diof-2}
\[
  \HGM\left(\left(\frac{1}{2q},-\frac{1}{2q}\right),\left(\frac{q+1}{2q},-\frac{q+1}{2q}\right)|z\right) \otimes \theta_{-\frac{1}{4}}\simeq \HGM\left(\left(\frac{1}{2}-\frac{1}{4q},\frac{1}{4q}\right),(1,1)\left|\frac{-(z-1)^2}{4z}\right.\right)^\sigma (-1),
\]
  where the $(-1)$ denotes a Tate twist by the inverse of the cyclotomic character.
\end{proposition}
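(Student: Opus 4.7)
The plan is to apply the two-step framework used throughout the paper: show that the geometric monodromy representations of the two sides coincide (so that Corollary~\ref{coro:unicity} reduces the problem to a single character), then pin down the character by matching the trace of $\Frob_{\id{p}}$ at one convenient specialization.

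For the monodromy comparison, the LHS monodromies are displayed in the text immediately above the proposition. For the RHS, I would first apply the Galois action $\sigma$ (with $j=q+2$) to the monodromies of $\M$ from Table~\ref{table:monodromies} via \cite[Prop.~4.6]{GPV}: this leaves the unipotent $M_0$ and the reflection $M_1=\mathrm{diag}(-1,1)$ unchanged and only replaces the eigenvalues at $\infty$ by $\{-\zeta_{4q}^{-(q+2)},\zeta_{4q}^{q+2}\}$. Next I would analyze the cover $\pi_2(z) = -(z-1)^2/(4z)$: the derivative $\pi_2'(z) = -(z-1)(z+1)/(4z^2)$ vanishes at $z=\pm 1$, and one checks $\pi_2(1)=0$, $\pi_2(-1)=1$, while $\pi_2(0)=\pi_2(\infty)=\infty$ with ramification index one. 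The crucial point is that $M_1$ of $\M^\sigma$ is an involution, so the preimage $z=-1$ (although ramified for $\pi_2$) has trivial pullback monodromy and disappears from the ramification locus; hence $\pi_2^*(\M^\sigma)$ is hypergeometric, with conjugacy classes a unipotent Jordan block at $z=1$ and $M_\infty^\sigma$ at both $z=0$ and $z=\infty$. Using the identity $-\zeta_{4q}^{-(q+2)} = \zeta_{4q}^{2q-(q+2)} = \zeta_{4q}^{q-2}$, these classes match the LHS ones, so rigidity \cite[Thm.~3.5]{BH} identifies the two geometric monodromies; the Tate twist does not affect this step.

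By Corollary~\ref{coro:unicity} the two motives differ by a character of $\Gal(\overline{\Q}/F)$, which I would fix by specializing both sides at $z_0=1$. On the LHS, $\Mqs$ has $a+b=c+d=0\in\ZZ$ with $a=\tfrac{1}{2q}$ and $c=\tfrac{q+1}{2q}$ both non-integer, so Lemma~\ref{lemma:spec-1-1} gives trace $\normid{p}$, and $\theta_{-1/4}(\Frob_{\id{p}})=\gent(1)^{-1}=1$ by~\eqref{eq:eta-weil}; the total LHS trace is $\normid{p}$. On the RHS, $\pi_2(1)=0$, and after the translation $w=z-1$ the function $\pi_2(w+1)=-w^2/(4(w+1))$ vanishes at $w=0$ with order~$2$; since $\M^\sigma$ has bottom parameters $(q+2,q+2)\equiv(1,1)\pmod{\ZZ}$ and non-integral top parameters $(q\pm 2)/(4q)$, Lemma~\ref{lemma:spec0-1} yields trace $1$ for the untwisted pullback at $z=1$. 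The Tate twist by $\chic^{-1}$ multiplies this by $\normid{p}$ under the normalization of the statement, matching the LHS value and proving the isomorphism.

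The main obstacle is the bookkeeping: the Galois conjugation by $\sigma$, the auxiliary character $\theta_{-1/4}$, and the Tate twist $(-1)$ must conspire so that the conjugacy classes and Frobenius traces align. Geometrically, what makes the whole argument run is that the monodromy of $\M$ at $z=1$ is a reflection, which is precisely what collapses the would-be extra ramification point $z=-1$ of the pullback and preserves the hypergeometric shape; this is the same mechanism that produced the phantom preimages $10\pm 6\sqrt{2}$ in Theorem~\ref{thm:deg4}.
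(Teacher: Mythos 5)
Your proposal is correct and follows essentially the same route as the paper's own proof: the same analysis of the ramification of $\pi_2$ (with the reflection at $z=1$ of $\M^\sigma$ collapsing the preimage $z=-1$), the same identification of the monodromies via rigidity, and the same specialization at $z=1$ using Lemma~\ref{lemma:spec-1-1} for the left-hand side and Lemma~\ref{lemma:spec0-1} (plus the Tate twist) for the right-hand side. The extra details you supply — the explicit effect of $\sigma$ on the parameters and the translation $w=z-1$ before applying Lemma~\ref{lemma:spec0-1} — are consistent with, and slightly more explicit than, what the paper records.
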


\begin{proof}
  Since the character $\theta_{-\frac{1}{4}}$ has monodromy
    $\exp(-1/4)$ at $0$ and $\exp(1/4)$ at $\infty$, the
    left hand side has monodromy matrices
    \[
M_0=
\begin{pmatrix}
  \exp(\frac{q+2}{4q}) & 0\\
  0 & \exp(\frac{q-2}{4q})
\end{pmatrix}
, \quad
M_1=
\begin{pmatrix}
  1 & 1\\
  0 & 1
\end{pmatrix}
,\quad
M_\infty=
\begin{pmatrix}
  \exp(\frac{q+2}{4q}) & 0\\
  0 & \exp(\frac{q-2}{4q})
\end{pmatrix}.
\]
The motive
$\HGM\left(\left(\frac{1}{2}-\frac{1}{4q},\frac{1}{4q}\right),(1,1)\left|z\right.\right)^\sigma$
    has monodromy matrices
\[
N_0=
\begin{pmatrix}
  1 & 1\\
  0 & 1
\end{pmatrix}
, \quad
N_1=
\begin{pmatrix}
  1 & 0\\
  0 & -1
\end{pmatrix}
,\quad
N_\infty=
\begin{pmatrix}
  \exp(\frac{q+2}{4q}) & 0\\
  0 & \exp(\frac{q-2}{4q})
\end{pmatrix}.
\]
The map $\pi_2$ is ramified at $0$ and $1$, with preimages $1$ and
$-1$ respectively. The preimage of $\infty$ are the points $0$ and
$\infty$, so the two monodromy representations are isomorphic. To
prove the statement we specialize at the value $z=1$. The
specialization of the character $\theta_{-\frac{1}{4}}$ is trivial, so
the trace of the left hand side at the Frobenius element of a prime
ideal $\id{p}$ equals $\normid{p}$ by
Lemma~\ref{lemma:spec-1-1}. The value of the right hand side equals
$1$ by Lemma~\ref{lemma:spec0-1}.
\end{proof}

Set $z_0=\frac{\alpha^q}{\gamma^P}$. Then up to a twist by
$\theta_{-\frac{1}{4}}$ Proposition~\ref{prop:isom-diof-2} implies
that instead of considering the superelliptic curve studied by Darmon,
we can consider the motive $\M(1)$ evaluated at the point
$\frac{-(z_0-1)^2}{4z_0}=-\frac{\beta^{2q}}{4\alpha^q\gamma^p}$. Then
Proposition~\ref{prop:second-formula} implies that we can instead
consider the hyperelliptic curve coming from the exponents $(p,p,q)$
evaluated at $u_0$ satisfying
\[
4u_0(u_0-1)=\frac{\beta^{2q}}{4\alpha^q\gamma^p}.
  \]
  The tame primes of the hyperelliptic curve are the ones dividing
  $2\alpha\beta\gamma$ as expected, but we can get information at the
  ramification at the prime $q$ (as exploited in \cite{2205.15861} and
  \cite{2410.21134}).

\bibliographystyle{plain}
\bibliography{biblio}

\end{document}